\documentclass[a4paper, 11pt]{amsart}
\usepackage{lmodern}
\usepackage{amsmath, amsthm, amssymb, amsfonts}
\usepackage{nicefrac,mathtools}
\usepackage{graphicx}
\usepackage{tikz-cd}

\usepackage{thmtools}

\numberwithin{equation}{section}
\theoremstyle{plain}
\newtheorem{theorem}[equation]{Theorem}

\newtheorem{lemma}[equation]{Lemma}
\newtheorem{proposition}[equation]{Proposition}
\newtheorem{corollary}[equation]{Corollary}

\theoremstyle{definition}

\theoremstyle{remark} 
\newtheorem{remark}[equation]{Remark}

\newcommand{\N}{\mathbb N}
\newcommand{\A}{\mathcal A}
\newcommand{\B}{\mathcal B}
\newcommand{\D}{\mathcal D}
\newcommand{\E}{\mathcal E}

\newcommand{\K}{\mathcal K}
\newcommand{\G}{\mathcal G}
\newcommand{\red}{\mathrm{red}}
\newcommand{\op}{\mathrm{op}}
\newcommand{\etale}{{\'e}tale}
\renewcommand{\phi}{\varphi}
\newcommand*{\inpro}[2]{\langle#1, #2\rangle}
\newcommand*{\Linpro}[2]{\langle\!\langle#1, #2\rangle\!\rangle}

\title[On Reduced \(C^*\)-algebras of semigroup dynamical systems]{Some remarks on Reduced \(C^*\)-algebras of semigroup dynamical systems and product systems}

\author{Md Amir Hossain}
\address{Indian Statistical Institute, Delhi Centre,  7, S. J. S. Sansanwal Marg, New Delhi, 110016, India}
\email{mdamirhossain18@gmail.com}

\author{S. Sundar}
\address{The Institute of Mathematical Sciences, A CI of Homi Bhabha National Institute, 4th cross street, CIT Campus, Taramani, Chennai, 600113, India.
}
\email{sundarsobers@gmail.com}

\keywords{Exactness,  Semigroup crossed products,  Product systems, Fell bundle equivalence}
\thanks{\emph{2020 Mathematics Subject classification.}  46L55, 46L05}

\begin{document}
\begin{abstract}
We study the exactness of the reduced crossed product of a semigroup dynamical system and the reduced $C^{*}$-algebra of a product system. We show that for a semigroup dynamical system $(A, P,\alpha)$, under reasonable hypotheses (e.g., $P$ is abelian and finitely generated), the reduced crossed product $A \rtimes_{\mathrm{red}} P$ is exact if and only if $A$ is exact. This strengthens our earlier result (\cite{Amir_Sundar-product-system}), where it was assumed that the action of $P$ on $A$ is by injective endomorphisms. 
 We also compare the groupoid crossed product described in \cite{Amir_Sundar-product-system} and the Fell bundle constructed in \cite{Rennie_Sims} for a product system, and show that they are equivalent as Fell bundles. 
\end{abstract}

\maketitle

\section{Introduction}

This is a continuation of the authors' work (\cite{Amir_Sundar-product-system}), where it was shown that the reduced \(C^*\)-algebra of a proper product system can be viewed as a semigroup crossed product (up to a Morita equivalence) and a few consequences regarding nuclearity, exactness and the invariance of $K$-theory (under homotopy) of the reduced $C^{*}$-algebra of a product system was derived by appealing to the groupoid crossed product picture (\cite{Sundar_Khoshkam}) of the semigroup crossed product. For background and relevant literature,  we refer the reader to \cite{Amir_Sundar-product-system} and the references therein.

The question of exactness for groupoid crossed products and Fell bundle \(C^*\)-algebras has been studied extensively over the past two decades. The exactness for locally compact groupoids was investigated by Anantharaman-Delaroche~\cite{Anantharaman-Delaroche-exact-gpd}, who developed the theory of exact groupoids and established several permanence results.
 Subsequently, Lalonde proved that exactness is preserved under equivalence of groupoids~\cite{Lalonde-2016-Equivalence-exact} and established further permanence properties, including that exactness passes to a broad class of subgroupoids~\cite{Lalonde-Some-permanenece-property-of-exact}.
 The theory was subsequently extended to groupoid crossed products by Lalonde, who studied the exactness of groupoid crossed products associated to exact groupoids. Later, using the stabilization theorem for Fell bundles, he showed in~\cite{Lalonde-stabilization-thm} that reduced \(C^*\)-algebras of Fell bundles over locally compact exact groupoids with Haar systems are exact whenever the \(C_0\)-section algebra of unit fibres is exact. Since the semigroup crossed products considered in this paper admit a realization as a groupoid crossed product,  these results provide a natural framework to analyze the exactness of semigroup crossed products.

In this paper, we prove two results.  First, we improve on our earlier result concerning the exactness of the reduced $C^{*}$-algebra of a proper product system (i.e., the left action of the coefficient algebra on each fibre is by compact operators).   Secondly, we compare the groupoid crossed product picture  (\cite{Amir_Sundar-product-system}) used by the authors with the groupoid Fell bundle constructed by Rennie et al. (\cite{Rennie_Sims}), and we show that they are equivalent as Fell bundles.

In \cite{Amir_Sundar-product-system}, under suitable hypotheses on the Wiener--Hopf groupoid, it was shown that, for a semigroup dynamical system $(A,P,\alpha)$, the reduced crossed product $A \rtimes_{\red} P$ is exact if and only if  $A$ is exact provided that $P$ satisfies the two-sided Ore condition and when the action of $P$ is by injective endomorphisms.
However, there are many examples of semigroup dynamical systems where the action is not by injective endomorphisms (e.g., consider the translation action of $P$ on $c_0(P)$). Also, there are examples of semigroups that are right Ore but not left Ore (e.g., the $ax+b$-semigroup). Thus, it is natural to seek conditions under which  $A \rtimes_{\red} P$ is exact. 

Here, we show that `under suitable amenability and directedness hypotheses', for a semigroup dynamical system $(A,P,\alpha)$, the reduced crossed product $A \rtimes_{\red} P$ is exact if and only if $A$ is exact. Thus, neither the hypothesis that $P$ satisfies the two-sided Ore condition nor the hypothesis that the action of $P$ is injective is required. In particular, this implies that if $P$ is a finitely generated abelian subsemigroup of a group $G$, then $A \rtimes_{\red} P$ is exact if and only if $A$ is exact.
The Morita equivalence mentioned before ensures that, under similar hypotheses, the reduced $C^{*}$-algebra of a proper product system is exact if and only if the coefficient algebra is exact. This improves our earlier result (\cite[Thm. 1.3]{Amir_Sundar-product-system}) where it was assumed that the left action of the coefficient algebra on each fibre is \emph{faithful}.

Our results in \cite{Amir_Sundar-product-system} and also those in this paper rely heavily on the fact that, for a proper product system $X$, up to a Morita equivalence, the reduced $C^{*}$-algebra of $E$, denoted $C_{\red}^{*}(X)$, can be written as a groupoid crossed product, i.e., as a $C^{*}$-algebra associated to a Fell bundle over a groupoid $\mathcal{G}$ called the Wiener--Hopf groupoid. On the other hand, for a quasi-lattice ordered semigroup and for a compactly aligned product system $X$, Rennie et al. (\cite{Rennie_Sims}), also constructed a Fell bundle over the same groupoid $\mathcal{G}$ and showed that the resulting Fell bundle $C^{*}$-algebra is the Nica--Toeplitz algebra of $X$, which coincides with $C_{\red}^{*}(X)$ when the groupoid $\mathcal{G}$ is amenable. It is natural to ask whether the two Fell bundles (when they make sense) are equivalent. We show that this is the case, thereby reconciling the two pictures. We believe that our results are of independent interest, and they are worth recording for future reference.

As far as the organization of this paper is concerned, in addition to this section, the paper has three more sections. In Section~\ref{prelim}, we collect the necessary preliminaries and fix notation. The exactness result is proved in Section~\ref{sec-exactness}, and the comparison of the Fell bundle picture is undertaken in Section~\ref{sec-equiv-Fell-bund}.

\section{Preliminaries}
\label{prelim}
Let $G$ be a countable discrete group, and let $P \subset G$ be a subsemigroup containing the identity element $e$. 
A semigroup dynamical system is a triple \((A,P, \alpha)\) consisting of a \(C^*\)-algebra \(A\) and a  semigroup of endomorphisms \(\alpha =\{\alpha_s\}_{s\in P}\) of \(A\),  i.e., \(s\mapsto \alpha_s\) is a semigroup homomorphism from \(P\to \mathrm{End}(A)\) and \(\alpha_e = \mathrm{id}_A\). Here, $\textup{End}(A)$ denotes the semigroup of endomorphisms of~$A$. We assume that, for each $s \in P$, $\alpha_s$ is non-degenerate, i.e., $\overline{\alpha_s(A)A}=A$. Given a semigroup dynamical system \((A, P, \alpha)\), consider the standard Hilbert \(A\)-module \(\ell^2(P)\otimes A\). Here, \(\otimes\) denotes the external tensor product. For \(x\in A\) and \(s\in P\), define adjointable operators \(\pi(x)\) and \(V_s\) on \(\ell^2(P)\otimes A\) by 
\begin{equation*}
\pi(x)(\delta_t\otimes y) = \delta_t\otimes \alpha_t(x)y \quad \textup{and} \quad V_s(\delta_t\otimes y) =\delta_{ts}\otimes y.
\end{equation*}

The \emph{reduced crossed product} or simply the \emph{semigroup crossed product} is defined as the \(C^*\)-algebra generated by \(\{V_s\pi(x): x\in A, s\in P\}\) and is denoted by \(A\rtimes_{\red}P\). The pair $(\pi,V)$ will be called the left regular representation of $(A,P,\alpha)$, and we sometimes denote $\pi$ by $\pi_A$ to stress the dependence of $\pi$ on $A$. Let $\rho$ be the right regular representation of $G$ and let $w_g$ be the compression of $\rho(g)$ onto $\ell^2(P)$. We say that $(P,G)$ satisfies the \textbf{Toeplitz condition} (see~\cite[Remark 2.5]{Amir_Sundar-product-system}) if $\{w_g \otimes 1:g \in G\}$ is contained in the semigroup  generated by $\{V_s,V_t^*:s,t \in P\} \cup \{0\}$.

We next recall the construction of a groupoid dynamical system from~\cite{Sundar_Khoshkam} for a given semigroup dynamical system \((A,P, \alpha)\). Here, the acting groupoid is called the \emph{Wiener--Hopf groupoid}.
 Denote the power set of \(G\) by \(\mathcal{P}(G)\). We consider $\mathcal{P}(G)$ as a compact metric space by identifying it with $\{0,1\}^{G}$ equipped with the product topology. Consider the right translation action of \(G\) on \(\mathcal{P}(G)\). Define 
 \[
  \Omega := \overline{\{P^{-1}a: a\in P\}} \quad \textup{and} \quad \widetilde{\Omega} :=\bigcup_{g\in G}\Omega g.
 \]
 The Wiener--Hopf groupoid \(\mathcal{G}\) is the reduction of the transformation groupoid \(\widetilde{\Omega}\rtimes G\) to the clopen subset \(\Omega\) of \(\widetilde{\Omega}\), that is,
 \[
  \mathcal{G}: =\widetilde{\Omega}\rtimes G|_{\Omega} = \{(F,g) : F\in \Omega, Fg\in \Omega\}.
 \]
 The groupoid operations are given by \((F,g)(G,h) = (F,gh)\) if and only if \(Fg=G\) and \((F,g)^{-1} = (Fg,g^{-1})\).

  Denote the unitisation of $A$ by $A^{+}$, and let \(\ell^{\infty}(G,A^{+})\) denote the set of all bounded \(A^{+}\)-valued functions on \(G\).  For \(g\in G\) and \(x\in A\), define $j_g(x) \in \ell^{\infty}(G,A^{+})$ by
 \begin{equation*}
 	j_g(x)(h):=\begin{cases}
 		\alpha_{hg^{-1}}(x)& \mbox{ if
 		} hg^{-1} \in P,\cr
 		&\cr
 		0 &  \mbox{ if } hg^{-1} \notin P.
 	\end{cases}
 \end{equation*}
 The right-translation map \(\beta_s(f)(h)=f(hs)\) gives an action of \(G\) on \(\ell^{\infty}(G,A^{+})\) satisfying \(\beta_s(j_g(x)) = j_{gs^{-1}}(x)\). Also, the map \(\phi\colon C_0(\widetilde{\Omega})\to  \ell^{\infty}(G,A^{+})\) defined by 
 \begin{equation*}
 \phi(f)(g) = f(P^{-1}g)
 \end{equation*}
 is a $G$-equivariant embedding. 
 We abuse notation and denote $\phi(f)$ by $f$. 
 Let \[\widetilde{\mathrm{D}}:=C^{*}(\{fj_g(x):f \in C_0(\widetilde{\Omega}), g \in G, x \in A\}).\] Then, $\widetilde{\mathrm{D}}$ is a $G$-$C_0(\widetilde{\Omega})$-algebra. (If $A$ is unital, $\widetilde{\mathrm{D}}$ coincides with the $C^{*}$-algebra generated by $\{j_g(x):g \in G, x \in A\}$.)
 
 We can  write the $G$-$C_0(\widetilde{\Omega})$-algebra $\widetilde{\mathrm{D}}$ as a section algebra of an upper semicontinuous bundle $\widetilde{\mathcal{D}}$ on which the transformation groupoid $\widetilde{\Omega}\rtimes G$  acts. The restriction of the bundle $\widetilde{\mathcal{D}}$ onto the clopen set $\Omega$ is denoted by $\mathcal{D}$, and $\D$ carries an action of  the Wiener--Hopf groupoid $\mathcal{G}=\widetilde{\Omega}\rtimes G|_{\Omega}$. We call the pair $(\mathcal{D},\mathcal{G})$ as the \textbf{groupoid dynamical system associated to $(A,P,\alpha)$}. The main result of \cite{Sundar_Khoshkam}, which we exploited in \cite{Amir_Sundar-product-system}, states that, when $(P,G)$ satisfies the Toeplitz condition,  $A\rtimes_{\red} P$ is isomorphic to the reduced groupoid crossed product $\mathcal{D} \rtimes_{\red} \mathcal{G}$.

The definition of a product system and its associated reduced $C^{*}$-algebra is recalled below. The opposite of the semigroup $P$ is denoted $P^{\op}$. Let $B$ be a $C^{*}$-algebra. A product system of $B$-$B$-correspondences over $P^{\op}$ is a family $X:=\{X_s\}_{s \in P}$ of $B$-$B$-correspondences together with unitaries $U_{s,t}:X_s \otimes X_t \to X_{ts}$, for $s,t \in P$, such that 
\begin{enumerate}
    \item for $s,t \in P$, $U_{s,t}$ is an isomorphism of $B$-$B$-correspondences, and 
    \item for $r,s,t \in P$, $U_{r,ts}(1 \otimes U_{s,t})=U_{sr,t}(U_{r,s} \otimes 1)$.
\end{enumerate}
Throughout this paper, we assume that the fibres are full. A product system $X$ is said to be \emph{proper} if the left action of $B$ on $X_s$, for each $s \in P$, is by compact operators.  

Let $X$ be a product system, and $H:= \bigoplus_{s \in P}X_s$ be the full Fock space. For $u \in X_{s}$, we let $\phi(u)$ be the creation operator defined by \[\phi(u)(\delta_t \otimes v)=\delta_{ts} \otimes U_{s,t}(u \otimes v).\] The $C^{*}$-algebra generated by $\{\phi(u): u \in X_s, s \in P\}$ is called the reduced $C^{*}$-algebra of $X$, and is denoted $C_{\red}^{*}(X)$.

We conclude this section by introducing some additional notation that will be used later.
\begin{itemize}
	\item Throughout this paper,  unless otherwise mentioned the \(C^*\)-algebras that we consider are assumed to be separable.
    \item For $C^{*}$-algebras $A$ and $B$, $A \otimes B$  denotes the spatial tensor product.  
    \item For a semigroup dynamical system $(A,P,\alpha)$ and a $C^{*}$-algebra $I$, the spatial tensor product $I \otimes A$ carries a diagonal action of $P$ where the action of $P$ on $I$ is trivial. This gives us a new semigroup dynamical system which we denote by $(I \otimes A,P, 1\otimes \alpha)$.
    \item For $x,y \in G$, we say $x \leq y$ if $yx^{-1} \in P$. This gives a pre-order on~$G$.  A subset $F \subset G$ is called directed if given $x,y \in F$, there exists $z \in F$ such that $z \geq x, y$.     \item For a product system $X$ over $P^{\op}$ and for $s \in P$, $1_s$ denotes the identity  operator on $X_s$.
    \item For a locally compact Hausdorff space \(Z\), the fibre of a \(C_0(Z)\)-algebra~\(A\) at a point \(z\in Z\) is denoted by \(A(z)\) or  by $A_z$.
\end{itemize}

\section{Exactness of reduced $C^*$-algebras of semigroup dynamical systems}
\label{sec-exactness}

In this section, we prove our first main result: an exactness criterion for the semigroup crossed product and for the reduced \(C^*\)-algebra of a product system. The proof of the main theorem requires some preliminary results, which we establish in a sequence of lemmas and propositions. The first two lemmas show that exactness behaves well with respect to inductive limits and to taking fibres of \(C_0(Z)\)-algebras. These results are likely known, and we have included proofs for completeness.

\begin{lemma}\label{lem-ind-limit-resp-short-ext}
	Let \((A_i, \alpha_{j,i})_{j\geq i}\), \((B_i, \beta_{j,i})_{j\geq i}\) and \((C_i, \gamma_{j,i})_{j\geq i}\) be three inductive systems of \(C^*\)-algebras with inductive limits \((A, \alpha_i)_{i\in I}\), \((B, \beta_i)_{i\in I}\) and \((C, \gamma_i)_{i\in I}\).
	Suppose that for each \(i\), there is a short exact sequence
	\[
	0 \longrightarrow A_i \xlongrightarrow{\phi_i} B_i \xlongrightarrow{\psi_i} C_i\longrightarrow 0
	\]
	of \(C^*\)-algebras, which is compatible with the connecting maps in the sense that  \(\beta_{j,i}\circ \phi_i=\phi_j\circ\alpha_{j,i}\) and \(\gamma_{j,i}\circ\psi_i = \psi_j\circ \beta_{j,i}\) for \(j\geq i\). Then, the induced sequence
	\[
	0 \longrightarrow A \xlongrightarrow{\phi} B \xlongrightarrow{\psi} C\longrightarrow 0
	\]
	is short exact.
\end{lemma}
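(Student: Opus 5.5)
First I construct the maps \(\phi\) and \(\psi\) by the universal property of inductive limits. The compatibility \(\beta_{j,i}\circ\phi_i=\phi_j\circ\alpha_{j,i}\) says that the family \(\beta_i\circ\phi_i\colon A_i\to B\) is compatible with the connecting maps of \((A_i)\). Hence there is a unique \(*\)-homomorphism \(\phi\colon A\to B\) with \(\phi\circ\alpha_i=\beta_i\circ\phi_i\) for all \(i\). Similarly, there is a unique \(\psi\colon B\to C\) with \(\psi\circ\beta_i=\gamma_i\circ\psi_i\). I will use throughout that \(\bigcup_i\alpha_i(A_i)\) is dense in \(A\), and likewise for \(B\) and \(C\). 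I will also use the norm formula \(\|\alpha_i(a)\|=\lim_{j}\|\alpha_{j,i}(a)\|=\inf_{j\ge i}\|\alpha_{j,i}(a)\|\).

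Next come the three easy parts. Surjectivity of \(\psi\) follows because the image of a \(*\)-homomorphism is closed and contains \(\gamma_i(\psi_i(B_i))=\gamma_i(C_i)\) for every \(i\), hence contains a dense subset of \(C\). The relation \(\psi\circ\phi=0\) follows because \(\psi\phi\alpha_i=\gamma_i\psi_i\phi_i=0\) on a dense set. Injectivity of \(\phi\) follows from the norm formula: \(\|\phi(\alpha_i(a))\|=\|\beta_i(\phi_i(a))\|=\inf_j\|\beta_{j,i}\phi_i(a)\|=\inf_j\|\phi_j\alpha_{j,i}(a)\|=\inf_j\|\alpha_{j,i}(a)\|=\|\alpha_i(a)\|\), using that each \(\phi_j\) is isometric. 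So \(\phi\) is isometric on a dense \(*\)-subalgebra, hence isometric.

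The main step is \(\ker\psi\subseteq\phi(A)\). Since \(\phi(A)\) is closed, it suffices to show that every \(b\in\ker\psi\) is a limit of elements of \(\phi(A)\). Fix \(\varepsilon>0\) and choose \(b_i\in B_i\) with \(\|b-\beta_i(b_i)\|<\varepsilon\). Then
\[
\|\gamma_i(\psi_i(b_i))\|=\|\psi(\beta_i(b_i))\|=\|\psi(\beta_i(b_i)-b)\|<\varepsilon .
\]
By the norm formula there is \(j\ge i\) with \(\|\psi_j(\beta_{j,i}(b_i))\|=\|\gamma_{j,i}\psi_i(b_i)\|<\varepsilon\). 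Put \(b_j:=\beta_{j,i}(b_i)\). Because the sequence at level \(j\) is exact, \(\psi_j\) induces an isometric isomorphism \(B_j/\phi_j(A_j)\cong C_j\). Therefore \(\operatorname{dist}(b_j,\phi_j(A_j))=\|\psi_j(b_j)\|<\varepsilon\), so there is \(a\in A_j\) with \(\|b_j-\phi_j(a)\|<\varepsilon\). Applying the contraction \(\beta_j\), we get \(\|\beta_i(b_i)-\phi(\alpha_j(a))\|<\varepsilon\), and hence \(\|b-\phi(\alpha_j(a))\|<2\varepsilon\). This proves exactness in the middle.

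I expect this middle step to be the only real obstacle. The key point is to pass from "approximately in the kernel at level \(i\)" to "approximately in the image at some later level \(j\)". That passage needs both the limit norm formula for \(C\) and the fact that a quotient map of \(C^*\)-algebras induces an isometry on the quotient, which makes distance to the ideal computable.
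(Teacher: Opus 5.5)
Your proposal is correct and follows essentially the same route as the paper. Surjectivity comes from a closed image containing a dense set, and middle exactness comes from approximating $b$ at level $i$, pushing to a level $j$ where $\|\psi_j(b_j)\|<\varepsilon$, and pulling back through exactness at level $j$ to get $\|b-\phi(a)\|<2\varepsilon$. You also make explicit the quotient-norm identity $\operatorname{dist}(b_j,\phi_j(A_j))=\|\psi_j(b_j)\|$, which the paper uses tacitly, and you prove injectivity of $\phi$ (omitted in the paper as ``similar'') by the isometry argument from the limit norm formula.
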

\begin{proof}
	To prove \(\psi\) is surjective, let \(c\in C\).
 Since \(C=\overline{\bigcup_{i\in I}\gamma_i(C_i)}\) and the image of $\psi$ is closed, it suffices to prove that $c \in \textup{Im}(\psi)$, when $c$ is of the form \(c=\gamma_i(c_i)\) for some $i$. Since  \(\psi_i\) is surjective, there exists \(b_i\in B_i\) such that \(\psi_i(b_i) = c_i\). The definition of \(\psi\) gives us
 \[
 \psi(\beta_i(b_i)) = \gamma_i(\psi_i(b_i))=\gamma_i(c_i)=c.
 \] 
 Hence, $c \in \textup{Im}(\psi)$. This proves that $\psi$ is surjective.

As \(\psi_i\circ \phi_i =0\) for all \(i\in I\), it is clear that $\psi \circ \phi=0$, and hence $\textup{Im}(\phi) \subset \textup{Ker}(\psi)$. To show the other inclusion, let  $b \in B$ be such that  \(\psi(b) =0\). Let $\epsilon>0$ be given. Choose $i$ and $b_i \in B_i$ such that $\|b-\beta_i(b_i)\|<\epsilon$. Since $\psi(b)=0$, 
\[
\|\gamma_i(\psi_i(b_i))\|=\|\psi(b)-\psi(\beta_i(b_i))\|\leq \|b-\beta_i(b_i)\|<\epsilon.
\] 
Hence, for large $j$, $\|\gamma_{j,i}(\psi_i(b_i))\|<\epsilon$. Choose one such $j$ and let $b_j=\beta_{j,i}(b_i)$. Then, 
     \[
     \|\psi_j(b_j)\|=\|\psi_j(\beta_{j,i}(b_i))\|=\|\gamma_{j,i}(\psi_i(b_i))\|<\epsilon.
     \]
Since $\textup{Ker}(\psi_j)=\textup{Im}(\phi_j)$, it follows that there exists $a_j \in A_j$ such that $   \|b_j-\phi_j(a_j)\|<\epsilon$.
Let $a=\alpha_j(a_j)$. Then, \begin{align*}
    \|b-\phi(a)\| &\leq \|b-\beta_j(b_j)\|+\|\beta_j(b_j)-\beta_j(\phi_j(a_j))\|\\& \leq \|b-\beta_i(b_i)\|+\|b_j-\phi_j(a_j)\|\\ & < 2\epsilon.
\end{align*}
As $\textup{Im}(\phi)$ is closed, it follows that $b\in \textup{Im}(\phi)$. Hence, $\textup{Ker}(\psi) \subset \textup{Im}(\phi)$. Consequently, $\textup{Im}(\phi)=\textup{Ker}(\psi)$.

	The argument required to prove that $\phi$ is injective is similar to the one used to prove $\textup{Im}(\phi)=\textup{Ker}(\psi)$. So, we omit the proof. 
\end{proof}

\begin{lemma}\label{lem-fiber-pres-ind-lim}
Let $Z$ be a second countable, locally compact Hausdorff space. 
	Let \(A,B\) and \(C\) be \(C_0(Z)\)-algebras. Let $\phi:A \to B$ and $\psi:B \to C$ be $C_0(Z)$-homomorphisms. If for every \(z\in Z\), the sequence 
	\[
	0 \longrightarrow A(z) \xlongrightarrow{\phi_z} B(z) \xlongrightarrow{\psi_z} C(z)\longrightarrow 0,
	\]
    is exact, then the sequence  
	\[
	0 \longrightarrow A \xlongrightarrow{\phi} B \xlongrightarrow{\psi} C\longrightarrow 0.
	\]
    is exact.
\end{lemma}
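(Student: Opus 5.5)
The plan is to use the standard facts about $C_0(Z)$-algebras. For $a$ in a $C_0(Z)$-algebra $A$, the map $z\mapsto \|a(z)\|$ is upper semicontinuous and vanishes at infinity, and $\|a\|=\sup_{z\in Z}\|a(z)\|$. The fibre $A(z)$ is the quotient $A/I_zA$, where $I_z=C_0(Z\setminus\{z\})$. A $C_0(Z)$-homomorphism $\phi$ maps $I_zA$ into $I_zB$, and therefore induces $\phi_z$ with $\phi(a)(z)=\phi_z(a(z))$. I will check the three properties of the sequence in turn.

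\emph{Injectivity of $\phi$ and the relation $\psi\circ\phi=0$.} Suppose $\phi(a)=0$. Then $\phi_z(a(z))=0$ for every $z$, so $a(z)=0$ for every $z$ because each $\phi_z$ is injective. Hence $\|a\|=\sup_z\|a(z)\|=0$. In the same way, $(\psi\circ\phi)(a)(z)=\psi_z\phi_z(a(z))=0$ for every $z$, which gives $\psi\circ\phi=0$.

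\emph{Surjectivity of $\psi$.} The image $\psi(B)$ is a closed $C_0(Z)$-submodule of $C$, since $\psi$ is $C_0(Z)$-linear and has closed range. Because every $\psi_z$ is surjective, the fibres of $\psi(B)$ exhaust those of $C$: for each $z$, $\{c(z):c\in\psi(B)\}=C(z)$. I then invoke the standard density criterion (e.g.\ Williams, \emph{Crossed products}, Prop.~C.24, or a partition-of-unity argument). It states that a $C_0(Z)$-submodule $M\subset C$ with $\{m(z):m\in M\}$ dense in $C(z)$ for all $z$ is dense in $C$. Since $\psi(B)$ is closed, it follows that $\psi(B)=C$.

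To keep the argument self-contained, I would sketch the criterion as follows. Fix $c\in C$ and $\epsilon>0$. The set $K=\{z:\|c(z)\|\ge\epsilon\}$ is compact. For each $z\in K$, choose $m^z\in M$ with $\|m^z(z)-c(z)\|<\epsilon$. By upper semicontinuity, the same inequality holds on an open neighbourhood $U_z$. Cover $K$ by finitely many of these neighbourhoods and take a partition of unity $\{f_k\}$ subordinate to them, with $0\le\sum f_k\le 1$ and $\sum f_k=1$ on $K$. Then $\|\sum_k f_km^{z_k}-c\|\le 2\epsilon$, evaluated fibrewise.

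\emph{Exactness in the middle.} Let $b\in B$ with $\psi(b)=0$. Then $\psi_z(b(z))=0$, so $b(z)\in\phi_z(A(z))$ for every $z$. Let $J=\phi(A)$, which is a closed ideal of $B$ and a $C_0(Z)$-submodule. The goal is to show $b\in J$. Consider the submodule $M=J+C_0(Z)b$. Its fibre at $z$ equals $\phi_z(A(z))+\mathbb{C}b(z)=\phi_z(A(z))$, which is the fibre of $J$. I then apply the same partition-of-unity estimate as in the surjectivity step, now approximating $b$ by elements of $J$. For each $z$, pick $a^z$ with $\phi(a^z)(z)=b(z)$. By upper semicontinuity of $w\mapsto\|(\phi(a^z)-b)(w)\|$, we get $\|\phi(a^z)(w)-b(w)\|<\epsilon$ near $z$. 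Patching over the compact set $\{\|b(z)\|\ge\epsilon\}$ gives an element $\sum f_k\phi(a^{z_k})=\phi(\sum f_ka^{z_k})\in J$ within $2\epsilon$ of $b$. Since $J$ is closed, $b\in J$.

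The main obstacle is the approximation step. It must be done with care: I need $\|x\|=\sup_z\|x(z)\|$, and the norm function must be upper semicontinuous and vanish at infinity, so that the compactness argument works without continuity of the bundle. Separability and second countability are not essentially needed here, beyond guaranteeing that the fibre picture is standard.
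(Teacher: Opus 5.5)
Your proposal is correct and follows essentially the same route as the paper: injectivity checked fibrewise, a density criterion for the $C_0(Z)$-submodule $\psi(B)$ (the paper cites Muhly--Williams, Lemma A.4), and a partition-of-unity patching of local preimages for exactness in the middle. The differences are minor: you patch over the compact set $\{z:\|b(z)\|\ge\epsilon\}$ instead of first treating compactly supported $b$ and then passing to the limit with an approximate identity of $C_0(Z)$, and your remark that $\phi(A)$ is an ideal of $B$ is not justified before the argument ends, but you never use it (only that $\phi(A)$ is a closed $C_0(Z)$-submodule), so the proof stands.
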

\begin{proof}
	Let $a \in A$ be such that \(\phi(a) =0\). Then, \(\phi_z(a(z)) =0\) for all \(z\in Z\). Since each \(\phi_z\) is injective, \(a(z)=0\) for all \(z\in Z\), hence \(a=0\). Thus, \(\phi\) is injective.

	To show \(\psi\) is onto, note that \(\psi(B)\) is a \(C_0(Z)\)-subalgebra of \(C\) as \(\psi\) is a \(C_0(Z)\)-linear map. For \(z\in Z\),
	\[
	\overline{\textup{span}\{\psi(b)(z) : b\in B\}} = \overline{\textup{span}\{\psi_z(b(z)) : b\in B\}} =C(z),
	\]
	because \(\psi_z\) is onto. Then, by~\cite[Lemma A. 4]{Muhly-Williams-Disintegration-theorem}, we conclude that \(\psi\) is onto.
	
	 As \(\psi_z\circ \phi_z =0\) for all \(z\in Z\), we have \(\psi\circ \phi = 0\). Hence, \(\textup{Im}(\phi) \subseteq \textup{Ker}(\psi)\). 
     
Let $b \in B$ be such that $\psi(b)=0$.  
         First, consider the case when $b$ is compactly supported.  Let $K \subset Z$ be compact such that $b$ vanishes outside~$K$. Let $\epsilon>0$. Let $z \in Z$ be given. Since $\psi$ is $C_0(Z)$-linear, $\psi_z(b(z))=0$. Since $\textup{Im}(\phi_z)=\textup{Ker}(\psi_z)$, there exists $\widetilde{a}_z \in A(z)$ such that  $b(z)=\phi_z(\widetilde{a}_z)$. Choose $a \in A$ such that $a(z)=\widetilde{a}_z$. Then, there exists a neighbourhood of~$z$, say $U_z$, such that $\|b(y)-\phi_y(a(y))\|<\epsilon$ for $y \in U_z$. 
A partition of unity argument implies that there exists a compactly supported section $a \in A$ such that $\|b(y)-\phi_y(a(y))\|<\epsilon$ for all $y \in Z$. Then, $\|b-\phi(a)\| <\epsilon$. Hence, $b \in \overline{\textup{Im}{(\phi)}}=\textup{Im}{(\phi)}$.

Now, let $b \in B$ be such that $\psi(b)=0$.  Let $(f_n)_n$ be an approximate identity in $C_0(Z)$ such that $f_n$ is compactly supported for each $n$. Since $C_0(Z)B$ is total in $B$, it follows that $(f_nb)_n \to b$ in norm. Since $\psi$ is a $C_0(Z)$-homomorphism, it follows that $\psi(f_nb)=0$ for all $n$. From the earlier case, we deduce that $f_nb \in \textup{Im}{(\phi)}$. As $\textup{Im}{(\phi)}$ is closed and $(f_nb)_n \to b$ in norm, it follows that $b \in \textup{Im}{(\phi)}$.
\end{proof}

\begin{lemma}\label{lem-comp-tens-prod}
	Let \((A,P, \alpha)\) be a semigroup dynamical system and let \(I\) be a \(C^*\)-algebra. Then, 
	\(I \otimes(A\rtimes_{\red}P) \cong (I\otimes A)\rtimes_{\red}P\).
\end{lemma}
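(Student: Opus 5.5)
The plan is to realize both algebras concretely on the same Hilbert module and check that they have the same generators. Regard $I$ as acting on itself, i.e.\ as $I \subset \mathcal{L}(I)$ with $I$ the standard Hilbert $I$-module. Then $I \otimes (A\rtimes_{\red}P)$ is, by definition of the spatial tensor product, the closed span of $\{c \otimes V_s\pi(x)\}$. Since both algebras are faithfully represented, this can be computed inside $\mathcal{L}(I \otimes \ell^2(P)\otimes A)$, where the external tensor product $I\otimes(\ell^2(P)\otimes A)$ is a Hilbert $I\otimes A$-module. On the other hand, $(I\otimes A)\rtimes_{\red}P$ acts on $\ell^2(P)\otimes(I\otimes A)$. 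The canonical flip unitary
\[
U:\ell^2(P)\otimes (I\otimes A)\to I\otimes \ell^2(P)\otimes A,\qquad \delta_t\otimes c\otimes y\mapsto c\otimes\delta_t\otimes y,
\]
is an isomorphism of Hilbert $I\otimes A$-modules. Conjugation by $U$ therefore gives a faithful $*$-homomorphism $\mathrm{Ad}\,U$ between the adjointable operator algebras.

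The key computation is on the generators. Write $\widetilde V_s$ and $\widetilde\pi$ for the left regular representation of $(I\otimes A,P,1\otimes\alpha)$. Then on elementary tensors $U\widetilde V_sU^*$ acts as $1\otimes V_s$, and
\[
U\widetilde\pi(c\otimes x)U^*=c\otimes\pi(x),
\]
because $(1\otimes\alpha)_t(c\otimes x)=c\otimes\alpha_t(x)$. Consequently
\[
U\widetilde V_s\widetilde\pi(c\otimes x)U^*=c\otimes V_s\pi(x).
\]
Here $c\otimes V_s\pi(x)$ denotes the operator $L_c\otimes V_s\pi(x)$ on the external tensor product, with $L_c$ left multiplication by $c$ on $I$. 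Two points need checking. First, $\widetilde V_s$ is adjointable. Second, the external tensor product of adjointable operators is adjointable with norm bounded by the product of the norms, so these formulas extend from elementary tensors by linearity and continuity.

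It remains to compare the generated $C^*$-algebras. The algebra $(I\otimes A)\rtimes_{\red}P$ is generated by $\widetilde V_s\widetilde\pi(z)$ for $z\in I\otimes A$. Since $z\mapsto \widetilde V_s\widetilde\pi(z)$ is linear and contractive, it suffices to use elementary tensors $z=c\otimes x$. So $\mathrm{Ad}\,U$ carries $(I\otimes A)\rtimes_{\red}P$ onto the $C^*$-algebra generated by $\{c\otimes V_s\pi(x)\}$. That algebra contains $c_1c_2^*\cdots\otimes w$ for every word $w$ in the $V_s\pi(x)$ and their adjoints. Hence it equals $\overline{\mathrm{span}}\{c\otimes b: c\in I,\ b\in A\rtimes_{\red}P\}$, using $I=\overline{\mathrm{span}}(I\cdot I^*\cdots)$, i.e.\ products are dense in $I$. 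Conversely, every $c\otimes b$ is a limit of such products.

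The main obstacle is the identification of this closed span with the spatial tensor product $I\otimes(A\rtimes_{\red}P)$. I would handle it by the standard fact that for faithful representations on Hilbert modules, the external tensor product representation $\mathcal{L}(E)\otimes_{\min}\mathcal{L}(F)\to\mathcal{L}(E\otimes F)$ is injective on the minimal tensor product (e.g.\ via Lance's book). Applying it with $E=I$, $F=\ell^2(P)\otimes A$ shows the norm on the algebraic tensor product induced from $\mathcal{L}(I\otimes\ell^2(P)\otimes A)$ is the spatial norm. Thus the closure is $I\otimes(A\rtimes_{\red}P)$, and $\mathrm{Ad}\,U$ gives the required isomorphism $I\otimes(A\rtimes_{\red}P)\cong (I\otimes A)\rtimes_{\red}P$.
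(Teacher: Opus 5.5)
Your proposal is correct and follows essentially the same route as the paper: conjugate by the canonical flip unitary between $I\otimes(\ell^2(P)\otimes A)$ and $\ell^2(P)\otimes(I\otimes A)$, and check on generators that $L_c\otimes V_s\pi(x)$ corresponds to $\widetilde V_s\widetilde\pi(c\otimes x)$. You are more careful than the paper on two points it takes for granted: the embedding $I\otimes(A\rtimes_{\red}P)\subseteq\mathcal{L}(I\otimes\mathcal{E})$, which you justify by injectivity of $\mathcal{L}(E)\otimes\mathcal{L}(F)\to\mathcal{L}(E\otimes F)$, and the passage from generators to the whole algebra via density of products in $I$. One small correction: your opening phrase ``closed span of $\{c\otimes V_s\pi(x)\}$'' should read ``$C^*$-algebra generated by $\{c\otimes V_s\pi(x)\}$'', which is what you correctly use later.
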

\begin{proof}
Let $(\pi_A,V)$ be the regular representation of $(A,P,\alpha)$ on $\ell^2(P) \otimes A$ and $(\pi_{I \otimes A},\widetilde{V})$ be the regular representation of $(I\otimes A,P,1 \otimes \alpha)$ on $\ell^2(P)\otimes (I \otimes A)$. 
Note that \(I\otimes (A\rtimes_{\red}P) \subseteq \mathcal{L} (I\otimes \mathcal{E})\), where \(\mathcal{E} =\ell^2(P)\otimes A\). Define a canonical unitary \(U\colon I\otimes (\ell^2(P)\otimes A) \to \ell^2(P)\otimes (I\otimes A)\) by 
\[
U(x\otimes (\delta_t\otimes y)) = \delta_t\otimes (x\otimes y).
\]
 Now, for \(x\in I, a\in A\), we have 
\begin{align*}
&U(x\otimes \pi_A(a))U^*(\delta_t\otimes (z\otimes y)) = U(x\otimes \pi_A(a)) (z\otimes(\delta_t\otimes y))\\
&=U(xz\otimes(\delta_t \otimes\alpha_t(a)y)) = \delta_t \otimes (xz\otimes \alpha_t(a)y)\\
&=\pi_{I\otimes A}(x\otimes a) (\delta_t\otimes (z\otimes y)).
\end{align*}
Thus, \(U(x\otimes \pi_A(a)) U^* = \pi_{I\otimes A}(x\otimes a)\) for \(x\in I\) and \(a\in A\). For \(s\in P\), we have 
\begin{align*}
	U(1\otimes V_s)U^*(\delta_t\otimes (z\otimes y)) &=U(1\otimes V_s) (z\otimes (\delta_t\otimes y))
	=U(z\otimes (\delta_{ts}\otimes y))\\ &=\delta_{ts}\otimes (z\otimes y)
	= \widetilde{V}_s(\delta_t\otimes (z\otimes y)).
\end{align*}
Hence, for $s \in P$, $U(1 \otimes V_s)U^*=\widetilde{V}_s$. 
Thus, $\mathrm{Ad}(U)$ maps the generators of $I \otimes (A \rtimes_{\red} P)$ onto the generators of $(I \otimes A) \rtimes_{\red} P$. Hence,   $\mathrm{Ad}(U)$ is an isomorphism from $I \otimes (A \rtimes_{\red} P)$ onto $(I \otimes A)\rtimes_{\red}P$. 
\end{proof}

\begin{proposition}\label{prop-exact-func}
	Let \((I,P, \alpha), (A,P,\beta)\) and \((B,P, \gamma)\) be semigroup dynamical systems. Assume that $A$ and $B$ are unital. Suppose that  \((P, G)\) satisfies the Toeplitz condition, every element of \(\widetilde{\Omega}\) is directed, and that the Wiener--Hopf groupoid \(\mathcal{G}\) is amenable. Then, a \(P\)-equivariant short exact sequence 
	\begin{equation}\label{eq-prop-exact-seq-1}
	0 \longrightarrow I \longrightarrow A \longrightarrow B \longrightarrow 0
		\end{equation} 
	gives the following short exact sequence
	\[
		0 \longrightarrow I\rtimes_{\red} P \longrightarrow A\rtimes_{\red} P \longrightarrow B \rtimes_{\red} P\longrightarrow 0.
	\]
\end{proposition}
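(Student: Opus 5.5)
We pass to the groupoid picture. Since $(P,G)$ satisfies the Toeplitz condition, the main result of \cite{Sundar_Khoshkam} gives $C\rtimes_{\red}P\cong \mathcal{D}_C\rtimes_{\red}\mathcal{G}$ for $C\in\{I,A,B\}$, where $(\mathcal{D}_C,\mathcal{G})$ is the groupoid dynamical system associated to $(C,P,\cdot)$. We will first show that the equivariant sequence (\ref{eq-prop-exact-seq-1}) induces a $\mathcal{G}$-equivariant short exact sequence
\[
0\longrightarrow \mathrm{D}_I \longrightarrow \mathrm{D}_A\longrightarrow \mathrm{D}_B\longrightarrow 0
\]
of $C_0(\Omega)$-algebras. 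We then use amenability of $\mathcal{G}$: for amenable groupoids the reduced and full crossed products agree, and the full crossed product is exact as a functor. Hence the crossed product sequence is exact, and the isomorphisms of \cite{Sundar_Khoshkam} are natural in the coefficient algebra, so they intertwine it with the sequence in the statement.

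\emph{The maps.} The maps $I\to A\to B$ act pointwise on $\ell^\infty(G,\cdot)$ after unitisation. They send $f j_g(x)$ to $f j_g(\cdot)$ of the image, so they restrict to $G$-equivariant $C_0(\widetilde{\Omega})$-homomorphisms $\widetilde{\mathrm{D}}_I\to\widetilde{\mathrm{D}}_A\to\widetilde{\mathrm{D}}_B$, and then to maps between the cut-downs to $\Omega$. By Lemma~\ref{lem-fiber-pres-ind-lim}, exactness reduces to exactness of the fibre sequences at each $F\in\Omega$.

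\emph{The fibres.} Here directedness enters, and this is the main obstacle. For $F\in\Omega$ we will identify the fibre $\mathrm{D}_C(F)$ with an inductive limit. Since $F$ is directed under $\le$, the fibre should be $\varinjlim_{g\in F} C$ with connecting maps $\alpha_{hg^{-1}}$ for $g\le h$ in $F$. Concretely, $j_g(x)$ with $g\in F$ is the generator coming from stage $g$, and generators with $g\notin F$ should vanish at $F$.

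To justify this, we evaluate at $F$ by restricting functions to (a limit of) the sets $P^{-1}a\cdot$translates. Since $h\in F$ and $g\le h$ gives $hg^{-1}\in P$, the value at $h$ is $\alpha_{hg^{-1}}(x)$, and the norm of a polynomial in these generators is $\limsup$ along $F$. Directedness is what makes products $j_g(x)j_{g'}(y)$ with $g,g'\in F$ collapse, at large $h\in F$, to a single stage $h_0\ge g,g'$, namely to $j_{h_0}(\alpha_{h_0g^{-1}}(x)\alpha_{h_0g'^{-1}}(y))$. Unitality of $A$ and $B$ ensures that the $C_0(\widetilde\Omega)$ factors are harmless (the fibre of $f$ is a scalar) and that $I$ sits as an ideal compatibly. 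I expect this identification, including checking the norm formula and naturality in $C$, to be the delicate step. If it fails as stated, I would instead prove surjectivity and $\ker\subseteq\mathrm{im}$ directly on fibres, using the same directedness reduction to single-stage elements.

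\emph{Conclusion.} Given the inductive-limit description, the fibre sequence at $F$ is the inductive limit of copies of $0\to I\to A\to B\to 0$. The connecting maps $\alpha,\beta,\gamma$ are compatible because the sequence is $P$-equivariant. Lemma~\ref{lem-ind-limit-resp-short-ext} then gives exactness of each fibre sequence, and Lemma~\ref{lem-fiber-pres-ind-lim} gives exactness of $0\to\mathrm{D}_I\to\mathrm{D}_A\to\mathrm{D}_B\to0$. Since the restriction of the bundle to $\Omega$ corresponds to the restriction of the $C_0(\widetilde\Omega)$-algebra, the same argument works for the cut-down to $\Omega$. Amenability of $\mathcal{G}$ then finishes the proof as described above.
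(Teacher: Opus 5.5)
Your proposal is correct and follows essentially the same route as the paper. The paper identifies each fibre $\mathcal{D}^{\bullet}_F$ with $\varinjlim_{s\in F}$ of the coefficient algebra: it imports the unital case for $A$ and $B$ from \cite[Prop. 6.2]{Amir_Sundar-product-system}, and obtains the fibre for the ideal $I$ by embedding it into the $A$-fibre using $1_{\Omega g}=j_g(1)$, which is exactly the role you assign to unitality. It then applies Lemma~\ref{lem-ind-limit-resp-short-ext}, Lemma~\ref{lem-fiber-pres-ind-lim}, exactness of the full crossed product together with amenability, and \cite[Thm. 4.3]{Sundar_Khoshkam}, as you do; your heuristic for the fibre identification (collapse to a single stage $h_0\in F$ by directedness, with the fibre norm computed along $F$) is the right mechanism behind that step.
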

\begin{proof}
Let $(\mathcal{D}^{I},\G)$, $(\mathcal{D}^{A},\G)$ and $(\mathcal{D}^{B},\G)$ be the groupoid dynamical system associated to $(I,P,\alpha)$, $(A,P,\beta)$ and $(B,P,\gamma)$, respectively. We also consider the equivalent ones \((\widetilde{\mathcal{D}}^I, \widetilde{\Omega}\rtimes G), (\widetilde{\mathcal{D}}^A, \widetilde{\Omega} \rtimes G)\) and \((\widetilde{\mathcal{D}}^B, \widetilde{\Omega}\rtimes G)\), where the acting groupoid is the transformation groupoid $\widetilde{\Omega}\rtimes G$. 
The section algebra of $\widetilde{\mathcal{D}}^{\bullet}$ is denoted $\widetilde{\mathrm{D}}^{\bullet}$,  and that of $\mathcal{D}^{\bullet}$ is denoted $\mathrm{D}^{\bullet}$ for \(\bullet \in \{I, A, B\}\).

	Let \(F\in \widetilde{\Omega}\). By assumption \(F\) is directed.  Consider the inductive systems of $C^{*}$-algebras
	\[
	  I_s =I, \quad A_s =A, \quad B_s=B
	\]
	with connecting maps
	\[
	 \phi_{t,s} = \alpha_{st^{-1}}, \quad \beta_{st^{-1}}, \quad \gamma_{st^{-1}} \quad \textup{for } t \leq s\in F,
	\]
	respectively. The natural map from $I_s \to  \varinjlim_{s \in F}I_s$ will be denoted by $\lambda_s$. We use the same letter $\lambda_s$ to denote the natural maps $A_s \to  \varinjlim_{s \in F}A_s$ and $B_s \to  \varinjlim_{s \in F}B_s$.  
    
    We claim that the fibre \(\widetilde{\mathcal{D}}^I_{F}\) of the bundle \(\widetilde{\mathcal{D}}^I\) is isomorphic to the inductive limit \(\varinjlim_{s\in F} I_s \). The analogous statement (i.e., the unital version) for $\widetilde{\mathcal{D}}^{A}_F$ and $\widetilde{\mathcal{D}}^{B}_F$ is ~\cite[Prop. 6.2]{Amir_Sundar-product-system}. 
    
    Fix $F \in \widetilde{\Omega}$. We embed $\ell^{\infty}(G,I^{+})$ inside $\ell^{\infty}(G,A^{+})$ and abuse notation. 
    As $A$ is unital, 
	 \begin{align*}
	\widetilde{\mathrm{D}}^A &= C^*(\{j_g(x): x\in A, g\in G\}), \quad \textup{and}\\
\widetilde{\mathrm{D}}^I &= C^*(\{\phi j_g(x): x\in I, g\in G, \phi \in C_0(\widetilde{\Omega})\}).
\end{align*} 
 Since \(\widetilde{\mathrm{D}}^I\) is an ideal of \(\widetilde{\mathrm{D}}^A\), we have the following commutative diagram
  \[
  \begin{tikzcd}
  	\widetilde{\mathrm{D}}^I \arrow[r, hook] \arrow[d, "q"]
  	& \widetilde{\mathrm{D}}^A \arrow[d, "q"] \\
  	\widetilde{\mathcal{D}}^I_F \arrow[r, dashed, hook]
  	& \widetilde{\mathcal{D}}^A_F.
  \end{tikzcd}
  \]
  In the above diagram, $q$ stands for the quotient map. It was shown in \cite[Prop. 6.2]{Amir_Sundar-product-system} that there exists a unique homomorphism $\lambda_1:\widetilde{\mathrm{D}}^{A} \to  \varinjlim_{s \in F}A_s$ such that, for $a \in A$,
  \[
   \lambda_1(j_s(a)) = \begin{cases}
   	\lambda_s(a) & \textup{ if } s\in F,\\
   	0 & \textup{ if } s\notin F.
   \end{cases} 
  \]
   Moreover, it was also shown that  $\lambda_1$ factors through to give an isomorphism, denoted $\widetilde{\lambda}_1$, between $\widetilde{\mathcal{D}}^A_F$ and the inductive limit $ \varinjlim_{s \in F}A_s$. 
  Since, for $x \in A$ and $g,g_1,g_2,\cdots,g_n \in G$,
  \[
  1_{\Omega g_1}1_{\Omega g_2}\cdots 1_{\Omega g_n} j_g(x) =j_{g_1}(1)j_{g_2}(1)\cdots j_{g_n}(1)j_g(x),
  \]
  we have 
  \begin{align*}
  \lambda_1( 1_{\Omega g_1}1_{\Omega g_2}\cdots 1_{\Omega g_n} j_g(x)) &= \lambda_1(j_{g_1}(1))\lambda_1(j_{g_2}(1))\cdots \lambda_1(j_{g_n}(1)) \lambda_1(j_{g}(x))\\
  &=  1_{\Omega g_1}(F)1_{\Omega g_2}(F)\cdots 1_{\Omega g_n}(F)  \lambda_1(j_{g}(x)).
\end{align*}
 Since \(C_0(\widetilde{\Omega})\) is the closure of  \(\textup{span}\{1_{\Omega g_1}1_{\Omega g_2}\cdots 1_{\Omega g_n}: g_1, g_2,\cdots, g_n\in G\}\), the above computation gives us
 \begin{equation}
     \label{master_one}
      \lambda_1(\phi j_g(x)) = \phi(F)\lambda_1(j_g(x))
 \end{equation}
 for $x \in A$. Hence, 
 \begin{equation*}
     \lambda_1(\phi d)=\phi(F)\lambda_1(d)
 \end{equation*}
 for $ \phi \in C_{0}(\widetilde{\Omega})$ and $d \in \widetilde{\mathrm{D}}^{A}$.  In particular, the above equation together with Eq.~\eqref{master_one} imply that $\lambda_1$ factors through $\widetilde{\mathcal{D}}^{I}_F$ and maps $\widetilde{\mathcal{D}}^{I}_F$ onto the $C^{*}$-algebra $\varinjlim_{s \in F}I_s \subset \varinjlim_{s \in F}A_s$. The resulting map thus obtained from $\widetilde{\mathcal{D}}^{I}_F \to  \varinjlim_{s \in F}I_s$ is denoted by $\widehat{\lambda}_1$.
 
 Since $\widetilde{\lambda}_1\colon \widetilde{\mathcal{D}}_F^{A} \to \varinjlim_{s \in F}A_s$ is an isomorphism and $\widetilde{\mathcal{D}}^I_F \hookrightarrow \widetilde{\mathcal{D}}^A_F$ is an embedding, it follows that  $\widehat{\lambda}_1$ is an isomorphism. This proves the claim. 
 
 Applying Lemma~\ref{lem-ind-limit-resp-short-ext} to Eq.~\eqref{eq-prop-exact-seq-1},  we obtain
\begin{equation*}
	0 \longrightarrow \widetilde{\mathcal{D}}_F^{I} \longrightarrow \widetilde{\mathcal{D}}_F^{A} \longrightarrow \widetilde{\mathcal{D}}_F^{B}\longrightarrow 0.
\end{equation*}
Recall that $\mathcal{D}^{\bullet}$ is the restriction of $\widetilde{\mathcal{D}}^{\bullet}$ onto $\Omega$. Thus, for $F \in \Omega$, we have a short exact sequence 
\begin{equation*}
	0 \longrightarrow \mathcal{D}_F^{I} \longrightarrow \mathcal{D}_F^{A} \longrightarrow \mathcal{D}_F^{B}\longrightarrow 0.
\end{equation*}
Applying Lemma \ref{lem-fiber-pres-ind-lim},  we obtain the following short exact sequence 
\[
0 \longrightarrow C(\Omega, \mathcal{D}^{I}) \longrightarrow C(\Omega, \mathcal{D}^{A}) \longrightarrow C(\Omega,\mathcal{D}^{B}) \longrightarrow 0.
\]
Since, the full groupoid crossed product functor is exact (see~\cite[Lemma 6.3.2]{Anantharaman} or~\cite[Thm. 18]{IW-2012-Ideal-st}), we have the following short exact sequence
\[
0 \longrightarrow C(\Omega, \mathcal{D}^{I})\rtimes \mathcal{G} \longrightarrow C(\Omega,{\mathcal{D}}^{A})\rtimes \mathcal{G} \longrightarrow C(\Omega, \mathcal{D}^B)\rtimes \mathcal{G} \longrightarrow 0.
\]
Since \(\mathcal{G}\) is amenable, we have the following short exact sequence
\[
0 \longrightarrow C(\Omega, \mathcal{D}^{I})\rtimes_{\red} \mathcal{G} \longrightarrow C(\Omega,{\mathcal{D}}^{A})\rtimes_{\red} \mathcal{G} \longrightarrow C(\Omega, \mathcal{D}^B)\rtimes_{\red} \mathcal{G} \longrightarrow 0.
\]
Since \((P,G)\) satisfies the Toeplitz condition,~\cite[Thm. 4.3]{Sundar_Khoshkam} allows us to rewrite the last equation as
\[
0 \longrightarrow I\rtimes_{\red} P \longrightarrow A\rtimes_{\red} P \longrightarrow B \rtimes_{\red} P\longrightarrow 0.
\]
This completes the proof.
\end{proof}

\begin{remark}
\label{inductive_limit_non-unital_remark}
   Let $(A,P,\alpha)$ be a semigroup dynamical system, and let $(\mathcal{D},\mathcal{G})$ be the associated groupoid dynamical system. We see from the proof of Prop.~\ref{prop-exact-func}, by applying to the dynamical systems $(A, P,\alpha)$ and its unitisation, that the fibre $\mathcal{D}_F$ is isomorphic to the inductive limit $ \varinjlim_{s \in F}A_s$ provided $(P, G)$ satisfies the hypothesis of Prop.~\ref{prop-exact-func}. 
\end{remark}

\begin{remark}
LaLonde (\cite[P. 242]{Lalonde}) defined the notion of \emph{separable exactness} for a $C^{*}$-algebras as follows: A $C^{*}$-algebra $A$ is said to be separably exact if for every short exact sequence 
\[
0 \longrightarrow I \longrightarrow B \longrightarrow C \longrightarrow 0\]
of separable $C^{*}$-algebras, the sequence 
\[
0 \longrightarrow I\otimes A  \longrightarrow B \otimes A \longrightarrow C \otimes A \longrightarrow 0\]
is short exact. He further  showed that for a $C^{*}$-algebra $A$, $A$ is exact if and only if $A$ is separably exact (\cite[Prop. 6.13]{Lalonde}). 
\end{remark}

The following lemma will be used in the proof of Thm.~\ref{thm-exactness}.  Although its proof is routine and is probably  known to experts, we were unable to find a suitable reference in the literature. Thus, we include a proof for completeness.
\begin{lemma}\label{lem-unital-exact}
	Let \(A\) be a \(C^*\)-algebra. Suppose that for every short exact sequence 
	\[
	0\longrightarrow I\longrightarrow B\longrightarrow C\longrightarrow0,
	\]
	in which \(B\) and \(C\) are unital separable $C^{*}$-algebras, the following sequence
	\[
	0\longrightarrow I\otimes A
	\longrightarrow B\otimes A
	\longrightarrow C\otimes A
	\longrightarrow 0
	\]
	is exact. Then, \(A\) is separably exact.
\end{lemma}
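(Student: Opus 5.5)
Given an arbitrary short exact sequence $0\to I\to B\to C\to 0$ of separable $C^{*}$-algebras with quotient map $q\colon B\to C$, we reduce to the unital case by passing to unitisations. Let $B^{+}$ and $C^{+}$ be the (forced) unitisations, i.e.\ we adjoin a unit even if one already exists. Then $q$ extends to a unital surjection $q^{+}\colon B^{+}\to C^{+}$ with kernel exactly $I$. Indeed, if $q(b)+\lambda 1=0$, then $\lambda=0$ because $C$ is an ideal of codimension one in $C^{+}$, and so $b\in I$. This gives a short exact sequence $0\to I\to B^{+}\to C^{+}\to 0$ with $B^{+},C^{+}$ unital and separable. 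By hypothesis,
\[
0\longrightarrow I\otimes A\longrightarrow B^{+}\otimes A\longrightarrow C^{+}\otimes A\longrightarrow 0
\]
is exact.

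Next we transport this back to the original sequence. For the spatial tensor product, injectivity always holds: $I\otimes A\to B\otimes A$ is injective because minimal tensor products preserve inclusions. Surjectivity of $q\otimes \mathrm{id}\colon B\otimes A\to C\otimes A$ is also automatic, since its image is closed and contains the dense algebraic tensor product. So the only issue is exactness in the middle, $\ker(q\otimes\mathrm{id})=I\otimes A$. Let $x\in B\otimes A$ with $(q\otimes \mathrm{id})(x)=0$. We view $B\otimes A\subset B^{+}\otimes A$, which is again injective because spatial tensor products preserve inclusions. The map $q^{+}\otimes\mathrm{id}$ restricts to $q\otimes \mathrm{id}$ on $B\otimes A$, landing in $C\otimes A\subset C^{+}\otimes A$; this can be checked on elementary tensors and extended by continuity. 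Hence $(q^{+}\otimes \mathrm{id})(x)=0$, and exactness of the unitised sequence gives $x\in I\otimes A$. Thus the original sequence is exact, and $A$ is separably exact.

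The main point needing care is the compatibility of inclusions. We must know that $C\otimes A\to C^{+}\otimes A$ is injective, so that vanishing in $C\otimes A$ is equivalent to vanishing in $C^{+}\otimes A$. We also need the identifications of $I\otimes A$ inside $B\otimes A$ and inside $B^{+}\otimes A$ to agree. Both follow from injectivity of the minimal tensor product with respect to inclusions of $C^{*}$-subalgebras. Everything else is routine bookkeeping.
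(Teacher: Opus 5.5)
Your proof is correct and follows the same route as the paper: pass to the forced unitisations to get $0\to I\to B^{+}\to C^{+}\to 0$, apply the hypothesis, and transfer middle exactness back to $B\otimes A$ using the inclusions into the unitised tensor products (the paper packages this as a commutative diagram with exact columns and a diagram chase). Your version just makes the chase explicit; in fact only injectivity of $B\otimes A\to B^{+}\otimes A$ is used, while injectivity of $C\otimes A\to C^{+}\otimes A$ is not needed, since vanishing in $C\otimes A$ trivially implies vanishing in $C^{+}\otimes A$.
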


\begin{proof}
	Let
	\[
	0\longrightarrow I
	\overset{\iota}{\longrightarrow}
	B
	\overset{\pi}{\longrightarrow}
	C
	\longrightarrow0
	\]
	be an arbitrary short exact sequence of separable \(C^*\)-algebras. After unitizations, we obtain the short exact sequence
	\[
	0\longrightarrow I
	\longrightarrow B^{+}
	\overset{\pi^{+}}{\longrightarrow}
	C^{+}
	\longrightarrow0,
	\]
	where \(\pi^{+}(b,\lambda)=(\pi(b),\lambda)\) for \((b,\lambda) \in B^+\). Also, we have a commutative diagram with exact columns
	\[
	\begin{tikzcd}[column sep=2.8em,row sep=2.2em]
			0 \arrow[r] & 0 \arrow[r]  & 0 \arrow[r] & 0 \arrow[r]  & 0\\
		0 \arrow[r] & 0 \arrow[r] \arrow[u] & \mathbb{C} \arrow[r,"\mathrm{id}"] \arrow[u] & \mathbb{C} \arrow[r] \arrow[u] & 0 \\
		0 \arrow[r] & I \arrow[r] \arrow[d,equal]  \arrow[u]
		& B^{+} \arrow[r,"\pi^{+}"] \arrow[u] & C^{+} \arrow[r] \arrow[u] & 0 \\
		0 \arrow[r] & I \arrow[r] & B \arrow[r,"\pi"] \arrow[u, hook] & C \arrow[r] \arrow[u, hook] & 0 \\
		0 \arrow[r] & 0 \arrow[r] \arrow[u] & 0 \arrow[r] \arrow[u] & 0 \arrow[r] \arrow[u] & 0.
	\end{tikzcd}
	\]
	Tensoring with \(A\) gives the following commutative diagram
	\[
	\begin{tikzcd}[column sep=2.6em,row sep=2.4em]
		0 \arrow[r] & 0 \arrow[r] & 0 \arrow[r]  & 0 \arrow[r] & 0\\
		0 \arrow[r] & 0 \arrow[r] \arrow[u] & A \arrow[r,"\mathrm{id}"] \arrow[u]
		& A \arrow[r] \arrow[u] & 0 \\
		0 \arrow[r] & I\otimes A \arrow[r] \arrow[d,equal] \arrow[u] & B^{+}\otimes A \arrow[r,"\pi^{+}\otimes\mathrm{id}"] \arrow[u] 
		& C^{+}\otimes A \arrow[r] \arrow[u]
		& 0 \\
		0 \arrow[r] & I\otimes A \arrow[r] 
		& B\otimes A \arrow[r,"\pi\otimes\mathrm{id}"] \arrow[u, hook] & C\otimes A \arrow[r] \arrow[u, hook] & 0 \\
		0 \arrow[r] & 0 \arrow[r] \arrow[u] & 0 \arrow[r] \arrow[u] & 0 \arrow[r] \arrow[u] & 0.
	\end{tikzcd}
	\]
By the hypothesis, the third row, the third column and the fourth columns are exact. The other columns are exact is tautological. By  diagram chasing, we can conclude that the fourth row is also exact.
	Hence, \(A\) is separably exact. 
\end{proof}

\begin{theorem}\label{thm-exactness}
	Suppose \((A, P, \alpha)\) is a semigroup dynamical system. We assume that $A$ is separable. Suppose that \((P,G)\) satisfies the Toeplitz condition, every element of \(\widetilde{\Omega}\) is directed, and that the Wiener--Hopf groupoid \(\mathcal{G}\) is amenable. Then, \(A\rtimes_{\red}P\) is exact if and only if \(A\) is exact.
\end{theorem}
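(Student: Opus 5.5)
We prove the two implications separately; the substantive one is that exactness of $A$ implies exactness of $A\rtimes_{\red}P$.

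\emph{($\Rightarrow$)} Suppose $A\rtimes_{\red}P$ is exact. Fix a unit vector $\delta_e$ and consider the compression $x\mapsto \langle \delta_e\otimes\cdot,\ \cdot\ \delta_e\otimes \cdot\rangle$. More precisely, $\pi(a)(\delta_e\otimes y)=\delta_e\otimes ay$, so the corner $Q\pi(A)Q$ is a copy of $A$, where $Q$ is the projection onto $\delta_e\otimes A$. Since $\pi$ is faithful, $A$ embeds as a $C^*$-subalgebra $\pi(A)\subset A\rtimes_{\red}P$. Exactness passes to $C^*$-subalgebras, so $A$ is exact.

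\emph{($\Leftarrow$)} Assume $A$ is exact. By Lalonde's result that exactness equals separable exactness, together with Lemma~\ref{lem-unital-exact}, it suffices to check the following. For every short exact sequence $0\to I\to B\to C\to 0$ with $B$ and $C$ unital and separable, the sequence
\[
0\to I\otimes(A\rtimes_{\red}P)\to B\otimes (A\rtimes_{\red}P)\to C\otimes(A\rtimes_{\red}P)\to 0
\]
must be exact. By Lemma~\ref{lem-comp-tens-prod}, the three terms identify with $(I\otimes A)\rtimes_{\red}P$, $(B\otimes A)\rtimes_{\red}P$ and $(C\otimes A)\rtimes_{\red}P$. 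Here $P$ acts by $1\otimes\alpha$, and the identifications are natural in the first tensor factor, so the maps correspond. Exactness of $A$ gives a $P$-equivariant short exact sequence
\[
0\to I\otimes A\to B\otimes A\to C\otimes A\to 0 .
\]
We then want Prop.~\ref{prop-exact-func} to yield exactness after applying $\rtimes_{\red}P$.

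\emph{Main obstacle.} Prop.~\ref{prop-exact-func} requires the middle and quotient algebras to be unital, but $B\otimes A$ and $C\otimes A$ are not unital unless $A$ is. To get around this, we rerun the proof of Prop.~\ref{prop-exact-func} using Remark~\ref{inductive_limit_non-unital_remark}, which identifies the fibres for a general (possibly non-unital) system $(D,P,\delta)$ as $\mathcal{D}^D_F\cong\varinjlim_{s\in F}D_s$. This identification is natural, since the isomorphism sends $j_s(d)$ to $\lambda_s(d)$. Hence, for each $F\in\Omega$, the fibre sequence is the inductive limit of the exact sequences $0\to I\otimes A\to B\otimes A\to C\otimes A\to 0$ along the maps $1\otimes\alpha_{st^{-1}}$, and it is exact by Lemma~\ref{lem-ind-limit-resp-short-ext}.

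\emph{Remaining checks.} We must verify that the maps between section algebras induced by the equivariant maps are $C_0(\Omega)$-linear and $\mathcal{G}$-equivariant. This holds because $j_g(x)$ is sent to $j_g$ of the image of $x$. Lemma~\ref{lem-fiber-pres-ind-lim} then gives exactness at the level of section algebras. Exactness of the full groupoid crossed product, amenability of $\mathcal{G}$, and the identification with the semigroup crossed product under the Toeplitz condition \cite[Thm.~4.3]{Sundar_Khoshkam} finish the argument exactly as in Prop.~\ref{prop-exact-func}.

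If naturality of the non-unital fibre identification proves awkward, a fallback is to work throughout inside the unitised system $((B\otimes A)^+,P)$. In that setting, $(I\otimes A)$, $(B\otimes A)$ and $(C\otimes A)$ appear as ideals, and one can apply the ideal argument from the proof of Prop.~\ref{prop-exact-func} directly.
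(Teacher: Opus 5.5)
Your \emph{only if} direction is the paper's: $\pi$ is faithful and $\pi(A)=V_e\pi(A)\subset A\rtimes_{\red}P$. For the converse, your proof is correct, but it deals with non-unitality in a different place than the paper does.

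The paper uses Prop.~\ref{prop-exact-func} as a black box. It first replaces $A$ by $A^{+}$, using that $A\rtimes_{\red}P$ is an ideal of $A^{+}\rtimes_{\red}P$ (Corollary~2.7 of \cite{Amir_Sundar-product-system}) and that exactness passes to ideals. Then $A$ and the $\alpha_s$ are unital, Lemma~\ref{lem-unital-exact} makes $B\otimes A$ and $C\otimes A$ unital, and the proposition applies verbatim.

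You instead extend Prop.~\ref{prop-exact-func} to arbitrary equivariant extensions, using Remark~\ref{inductive_limit_non-unital_remark} for all three systems plus naturality. The naturality does hold. For a $P$-equivariant $\theta\colon D\to D'$, pointwise application of $\theta^{+}$ on $\ell^{\infty}(G,D^{+})$ restricts to a $G$-equivariant, $C_0(\widetilde\Omega)$-linear map $\widetilde{\mathrm{D}}^{D}\to\widetilde{\mathrm{D}}^{D'}$ with $fj_g(x)\mapsto fj_g(\theta(x))$. On fibres this matches $\lambda_s(x)\mapsto\lambda_s(\theta(x))$. Your formula $j_s(d)\mapsto\lambda_s(d)$ also makes sense for non-unital $D$, since $j_g(x)=1_{\Omega g}\,j_g(x)\in\widetilde{\mathrm{D}}$.

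What each route buys: yours gives a stronger intermediate result, namely exactness of $\rtimes_{\red}P$ on all $P$-equivariant extensions without unitality. It makes both the unitisation of $A$ and Lemma~\ref{lem-unital-exact} unnecessary; you invoke the latter anyway, harmlessly. The paper's route avoids re-checking the fibre identification and its naturality, at the cost of importing the ideal inclusion from the earlier paper.

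Your fallback is misstated: $C\otimes A$ is a quotient, not an ideal, of anything built from $(B\otimes A)^{+}$. The workable fallback is the paper's, i.e.\ unitise $A$ rather than $B\otimes A$.
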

\begin{proof}
The `only if' part follows from the fact that  $A$ is a subalgebra of $A \rtimes_{\red} P$ and exactness passes to subalgebras. For the `if part', suppose that $A$ is exact. 
	Let $A^{+}$ be the unitisation of $A$. Then, $A^{+}$ is exact. For $s \in P$, let $\alpha_{s}^{+}:A^{+} \to A^{+}$ be defined by 
	$\alpha_{s}(x,\lambda)=(\alpha_s(x),\lambda)$ for $(x,\lambda) \in A^{+}=A \oplus \mathbb{C}$. It follows from Corollary 2.7 of~\cite{Amir_Sundar-product-system} that $A \rtimes_{\red} P$ is an ideal of $A^{+}\rtimes_{\red} P$. As exactness passes to ideals and subalgebras, it suffices to prove that $A^{+}\rtimes_{\red} P$ is exact. In other words, to prove the `if part' we can assume that $A$ is unital and $\alpha_s$ is unital, i.e., $\alpha_s(1)=1$ for each $s \in P$. 
 
Thus, assume that \(A\) is exact, unital and $\alpha_s$ is unital for every $s \in P$. By~\cite[Prop. 6.13]{Lalonde}, it suffices to prove that $A \rtimes_{\red} P$ is separably exact.
Let 
	\begin{equation*}
	0 \longrightarrow I \longrightarrow B \longrightarrow C \longrightarrow 0
\end{equation*}
be a short exact sequence of separable \(C^*\)-algebras.  Thanks to Lemma~\ref{lem-unital-exact}, we may further assume that $B$ and $C$ are unital. 
  Since \(A\) is exact, we have the following short exact sequence
\[
 	0 \longrightarrow I\otimes A \longrightarrow B\otimes A \longrightarrow C\otimes A \longrightarrow 0.
\]
Applying Prop.~\ref{prop-exact-func} to the above sequence (with the trivial action of \(P\) on \(I, B, C\) and the given action on \(A\)), we obtain the following short exact sequence
\[
	0 \longrightarrow (I\otimes A)\rtimes_{\red}P \longrightarrow (B\otimes A)\rtimes_{\red}P \longrightarrow (C\otimes A)\rtimes_{\red}P \longrightarrow 0.
\]
Using Lemma~\ref{lem-comp-tens-prod}, we can rewrite the last equation as
\begin{equation*}
	0 \longrightarrow I\otimes (A\rtimes_{\red}P) \longrightarrow B\otimes (A\rtimes_{\red}P) \longrightarrow C\otimes (A\rtimes_{\red}P) \longrightarrow 0.
\end{equation*}
Therefore, \(A\rtimes_{\red}P\) is separably exact.  Hence, by ~\cite[Prop. 6.13]{Lalonde}, $A \rtimes_{\red} P$ is exact. 
\end{proof}

\begin{corollary}\label{main-coro-exactness}
Let \(P\) be a subsemigroup of \(G\). Suppose that $P$ has an order unit, i.e., there exists $a \in P$ such that  \(\bigcup_{n\in \N}Pa^{-n} = G\). Let $B$ be a $C^{*}$-algebra, and let $X$ be a product system of $B$-$B$-correspondences over $P^{\op}$. Assume that each fibre \(X_s\) is full and the left action of $B$ on $X_s$ is by compact operators. Suppose that every element of \(\Omega\) is directed and that the Wiener--Hopf groupoid \(\mathcal{G}\) is amenable. Then \(C^*_{\red}(X)\) is exact if and only if the coefficient algebra \(B\) is exact.
\end{corollary}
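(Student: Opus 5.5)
The plan is to reduce Corollary~\ref{main-coro-exactness} to Theorem~\ref{thm-exactness} via the Morita equivalence from \cite{Amir_Sundar-product-system}. That paper shows that, for a proper product system $X$ of full correspondences over $P^{\op}$, the algebra $C^{*}_{\red}(X)$ is Morita equivalent to a semigroup crossed product $A\rtimes_{\red}P$. Here $A$ is an inductive-limit type algebra built from the compacts $\mathcal{K}(X_s)$, and the endomorphisms $\alpha_s$ come from $T\mapsto T\otimes 1_s$, which is where properness is used. This equivalence has no faithfulness hypothesis on the left action; the faithfulness in \cite[Thm. 1.3]{Amir_Sundar-product-system} was needed only to make the $\alpha_s$ injective, which Theorem~\ref{thm-exactness} no longer needs. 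Since all algebras are separable and exactness is invariant under Morita equivalence (stable isomorphism, and $A\otimes\mathcal{K}$ is exact iff $A$ is), $C^{*}_{\red}(X)$ is exact iff $A\rtimes_{\red}P$ is exact.

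Next I check the hypotheses of Theorem~\ref{thm-exactness}. First, the order-unit hypothesis $\bigcup_n Pa^{-n}=G$ should imply the Toeplitz condition for $(P,G)$. Any $g\in G$ can be written as $g=sa^{-n}$ with $s\in P$, so $w_g\otimes 1$ should equal $V_{a^n}^{*}V_s$ or $V_sV_{a^n}^{*}$, depending on the convention for right translation; this is presumably \cite[Remark 2.5]{Amir_Sundar-product-system}. Second, directedness is assumed here only for elements of $\Omega$, while the theorem requires it for $\widetilde{\Omega}=\bigcup_g \Omega g$. This is immediate, because right translation $F\mapsto Fg$ preserves the pre-order: $y x^{-1}=(yg)(xg)^{-1}$. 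So $Fg$ is directed whenever $F$ is. Amenability of $\mathcal{G}$ is assumed directly.

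Then Theorem~\ref{thm-exactness} gives that $A\rtimes_{\red}P$ is exact iff $A$ is exact. It remains to show that $A$ is exact iff $B$ is exact. By Remark~\ref{inductive_limit_non-unital_remark}, or directly from the construction, $A$ is an inductive limit of algebras $\mathcal{K}(X_s)$ (or finite direct sums of them). Fullness of $X_s$ makes $\mathcal{K}(X_s)$ Morita equivalent to $B$, so each building block is exact iff $B$ is. Exactness passes to inductive limits: apply Lemma~\ref{lem-ind-limit-resp-short-ext} to the sequences tensored with the building blocks. Conversely, $B$ embeds into $A$ as a corner or subalgebra, possibly up to Morita equivalence, so exactness of $A$ gives exactness of $B$.

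The main obstacle is pinning down the exact form of $A$ in the Morita equivalence of \cite{Amir_Sundar-product-system} and confirming that it holds without faithfulness. I expect that paper to use faithfulness only to show that $\alpha_s$ is injective, and I would verify this first. If $A$ is not literally an inductive limit of the $\mathcal{K}(X_s)$, I would instead work with a fibrewise $C_0$-algebra description and use that exactness is preserved under extensions by and limits of exact pieces.
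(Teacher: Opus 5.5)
Your outline matches the paper's: reduce to Theorem~\ref{thm-exactness} via a Morita equivalence, get the Toeplitz condition from $w_g=V_{a^n}^{*}V_x$ for $g=xa^{-n}$, and get directedness of $\widetilde{\Omega}$ from right-invariance of $\leq$. But the central step is misidentified, and as you describe it, it would fail.

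The Morita equivalence \cite[Thm. 1.1]{Amir_Sundar-product-system} is between $C^{*}_{\red}(X)$ and $\mathcal{K}_B(\mathcal{E})\rtimes_{\red}P$. Here $\mathcal{E}$ is a full, countably generated Hilbert $B$-module with unitaries $\sigma_s\colon \mathcal{E}\otimes_B X_s\to\mathcal{E}$, and the action is $\alpha_s(T)=\sigma_s(T\otimes 1)\sigma_s^{*}$. So $X$ must first be realized as the product system of an $E_0$-semigroup on $\mathcal{L}_B(\mathcal{E})$. That realization is \cite[Thm. 1.2]{Amir_Sundar-product-system}, and it is the second, essential use of the order unit; your proposal uses $a$ only for the Toeplitz condition and never makes this step.

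Your substitute algebra cannot do this job. Suppose $A$ is a limit of the $\mathcal{K}(X_p)$ under the natural connecting maps $\mathcal{K}(X_p)\to\mathcal{K}(X_{sp})$, $T\mapsto T\otimes 1_s$. Then your proposed endomorphism $T\mapsto T\otimes 1_s$ is itself a connecting map, so it induces the identity on the limit. The action on $A$ would be trivial, and there is then no reason for $A\rtimes_{\red}P$ to be Morita equivalent to $C^{*}_{\red}(X)$. Also, Remark~\ref{inductive_limit_non-unital_remark} says that the fibres $\mathcal{D}_F$ are limits $\varinjlim_{s\in F}A$ of copies of $A$. It does not say that $A$ itself is a limit of the $\mathcal{K}(X_s)$.

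Once you take $A=\mathcal{K}_B(\mathcal{E})$, the last part of your argument is unnecessary: you need neither inductive-limit permanence nor an embedding of $B$ into $A$. Fullness of $\mathcal{E}$ makes $\mathcal{K}_B(\mathcal{E})$ Morita equivalent to $B$. Countable generation of $\mathcal{E}$ gives the separability required in Theorem~\ref{thm-exactness}. The chain ``$C^{*}_{\red}(X)$ exact $\iff$ $\mathcal{K}_B(\mathcal{E})\rtimes_{\red}P$ exact $\iff$ $\mathcal{K}_B(\mathcal{E})$ exact $\iff$ $B$ exact'' then completes the proof.
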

\begin{proof}
Observe that right translation preserves the pre-order $\leq$. Thus, a right translate of a directed set is directed. Hence,  every element of \(\widetilde{\Omega}\) is directed.

We claim that $(P,G)$ satisfies the Toeplitz condition. 
	Let $\rho$ be the right
	regular representation of G on $\ell^2(G)$. For $g \in G$, 
	let $w_g$ be the compression of $\rho(g)$ to $\ell^2(P)$.
	For $s \in P$, let $v_s:=w_s$. Let $\{\delta_s\}_{s \in P}$
	be the standard orthonormal basis for $\ell^2(P)$.
	Then, for $s \in P$ and $g \in G$, 
	\begin{equation}\label{eq-toeplitz-cond-proof}
		v_s(\delta_t)=\delta_{ts}; \quad w_g(\delta_t):= \begin{cases}
			\delta_{tg} & \textup{ if } tg \in P,\\
			0 & \textup{ otherwise.}
		\end{cases} 
	\end{equation} 
	Let $g \in G$ be given. Since $G=\bigcup_{n=1}^{\infty}Pa^{-n}$, there exists 
	$n \geq 1$ and $x \in P$ such that $g=xa^{-n}$. A direct check with
	Eq.~\eqref{eq-toeplitz-cond-proof} shows that $w_g=v_{a^n}^{*}v_x$. Hence, $(P,G)$ satisfies
	the Toeplitz condition.

Since \(P\) has an order unit,~\cite[Thm. 1.2]{Amir_Sundar-product-system} ensures  the existence of a countably generated Hilbert \(B\)-module \(\mathcal{E}\) and an \(E_0\)-semigroup \(\alpha\) over \(P\) on \(\mathcal{L}_B(\mathcal{E})\) such that \(X\) is isomorphic to the product system associated to~\(\alpha\).
Therefore, Thm.~\ref{thm-exactness} ensures that
 \(\mathcal{K}_B(\mathcal{E})\rtimes_{\red}P\) is exact if and only if \(\mathcal{K}_B(\mathcal{E})\) is exact.
  On the other hand, by~\cite[Thm. 1.1]{Amir_Sundar-product-system}, the reduced \(C^*\)-algebra \(C^*_{\red}(X)\) and \(\mathcal{K}_B(\mathcal{E})\rtimes_{\red} P\) are Morita equivalent. 
 Since exactness is preserved under Morita equivalence, \(C^*_{\red}(X)\) is exact if and only if \(\mathcal{K}_B(\mathcal{E})\) is exact. Since \(\mathcal{E}\) is full, \(\mathcal{K}_B(\mathcal{E})\) and \(B\) are Morita equivalent. Thus, \(\mathcal{K}_B(\mathcal{E})\) is exact if and only if \(B\) is exact. Consequently, \(C^*_{\red}(X)\) is exact if and only if \(B\) is exact.
\end{proof}

\begin{remark}
    We refer the reader to Remark 4.5 of \cite{Amir_Sundar-product-system} for a list of examples for which Thm. \ref{thm-exactness} and Corollary \ref{main-coro-exactness} are applicable. 
    We mention here that the conclusion of  Corollary \ref{main-coro-exactness} also holds if $P=\mathbb{F}_{n}^{+}$; the free semigroup on $n$ letters. For, in this case, every product system comes from an $E_0$-semigroup, and the Wiener--Hopf groupoid satisfies the amenability and the directedness hypotheses (see \cite[Section 6]{Sundar_Khoshkam} and \cite[Section 8.2]{Xin-Li-2012-Nucl-semigroup-alg-amen}). Then, the rest of the proof works as it is. 
\end{remark}

\section{Equivalence of Fell bundles}
\label{sec-equiv-Fell-bund}
In this section, we compare our groupoid crossed product picture with the Fell bundle constructed in~\cite{Rennie_Sims}. Our main result is that these two constructions give rise to Fell bundles that are equivalent in the sense of Muhly--Williams~(\cite{Muhly-Williams-Disintegration-theorem}).
We refer to~\cite{Muhly-Williams-Disintegration-theorem} for the basics of Fell bundles and upper semicontinuous Banach bundles.  For an upper semicontinuous Banach bundle $\E$, we denote the space of compactly supported continuous sections with the inductive limit topology by $\Gamma_{\E}$.  We implicitly assume that the bundles that we consider have \emph{enough sections}.

Let $\G$ be a locally compact Hausdorff \etale\ groupoid. 
Let \(p\colon \mathcal{B}\to \G\) be a Fell bundle, and let  \(q\colon\ \mathcal{E} \to \G\) be an upper semicontinuous Banach bundle. We say that \(\mathcal{B}\) acts on \(\mathcal{E}\) from the left if there is a  bilinear map \((b,f)\mapsto b\cdot f\) from \(\mathcal{B}*\mathcal{E}:= \{(b,f) \in \mathcal{B}\times \mathcal{E}: s(p(b)) = r(q(f))\}\) to \(\mathcal{E}\) such that 
\begin{enumerate}
	\item \(q(b\cdot f) =p(b)q(f)\);
	\item \((a\cdot b)\cdot f =a\cdot (b\cdot f)\) whenever they make sense, and
	\item \(\|b\cdot f\| \leq \|b\|\|f\|\).
\end{enumerate}
One can define a right action of \(\B\) on \(\E\) analogously. 
\begin{lemma}\label{lem-equi-cond-continuity}
Let \(\G\) be a locally compact Hausdorff \etale\ groupoid, let \(p\colon \B\to \G\) be a Fell bundle and let \(q\colon \E\to \G\) be an upper semicontinuous Banach bundle.  Suppose that $\B$ acts on $\E$ from the left. Then, the following statements are equivalent. 
\begin{enumerate}
	\item The map \(\B * \E \ni (b,e)\mapsto b\cdot e \in \E\)  is continuous.
	\item\label{cond-2-left-action} For sections \(f\in \Gamma_{\B}\) and \(g\in \Gamma_{\E}\), the convolution \(f*g\in \Gamma_{\E}\).
\end{enumerate}
\end{lemma}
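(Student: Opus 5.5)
For (1)\(\Rightarrow\)(2), fix \(f\in\Gamma_{\B}\) and \(g\in\Gamma_{\E}\). Since \(\G\) is \etale, the convolution is
\[
(f*g)(\gamma)=\sum_{\eta\in\G^{r(\gamma)}} f(\eta)\cdot g(\eta^{-1}\gamma),
\]
which is a finite sum supported in the compact set \(\operatorname{supp}(f)\operatorname{supp}(g)\). By partitions of unity we may write \(f\) and \(g\) as finite sums of sections supported in open bisections, and reduce to the case where \(\operatorname{supp}(f)\subset U\) and \(\operatorname{supp}(g)\subset V\) with \(U,V\) open bisections. Then \(UV\) is an open bisection. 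For \(\gamma\in UV\) there is a unique factorisation \(\gamma=\eta\xi\) with \(\eta\in U\), \(\xi\in V\), and the maps \(\gamma\mapsto\eta\), \(\gamma\mapsto\xi\) are continuous, being built from the homeomorphisms \(r|_U\) and \(s|_V\). On \(UV\) we get \((f*g)(\gamma)=f(\eta(\gamma))\cdot g(\xi(\gamma))\), which is a composite of the continuous map \(\gamma\mapsto (f(\eta(\gamma)),g(\xi(\gamma)))\in\B*\E\) with the action map. Hence \(f*g\) is continuous on \(UV\). It vanishes off a compact subset of \(UV\), and since \(UV\) is open, zero extension preserves continuity. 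So \(f*g\in\Gamma_{\E}\).

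For (2)\(\Rightarrow\)(1), take a net \((b_i,e_i)\to(b,e)\) in \(\B*\E\), and write \(\gamma_i=p(b_i)\), \(\xi_i=q(e_i)\), so that \(\gamma_i\to\gamma=p(b)\) and \(\xi_i\to\xi=q(e)\). Since the bundles have enough sections, choose \(f\in\Gamma_{\B}\) with \(f(\gamma)=b\) and \(g\in\Gamma_{\E}\) with \(g(\xi)=e\). We may take these supported in open bisections \(U\ni\gamma\) and \(V\ni\xi\), by multiplying by a bump function. Then \((f*g)(\gamma_i\xi_i)=f(\gamma_i)\cdot g(\xi_i)\) eventually, and by (2) this converges to \(f(\gamma)\cdot g(\xi)=b\cdot e\). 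Next we estimate
\[
\|b_i\cdot e_i-f(\gamma_i)\cdot g(\xi_i)\|\le\|b_i-f(\gamma_i)\|\,\|e_i\|+\|f(\gamma_i)\|\,\|e_i-g(\xi_i)\|.
\]
In an upper semicontinuous Banach bundle, \(b_i-f(\gamma_i)\to 0_\gamma\), so \(\|b_i-f(\gamma_i)\|\to0\). The norms \(\|e_i\|\) and \(\|f(\gamma_i)\|\) are eventually bounded by upper semicontinuity of the norm. Finally we apply the standard criterion (Muhly--Williams, Lemma A.3 type): if \(x_i,y_i\) lie over the same points, \(y_i\to y\), and \(\|x_i-y_i\|\to0\), then \(x_i\to y\). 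This gives \(b_i\cdot e_i\to b\cdot e\).

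I expect the main obstacle to be the bookkeeping in these convergence criteria for upper semicontinuous bundles. In particular, one must justify that \(\|b_i-f(\gamma_i)\|\to0\) follows from \(b_i\to f(\gamma)\), which uses continuity of subtraction on \(\B*_{\G}\B\) together with the fact that norms tend to zero at zero vectors. One must also check that the bisection reduction in the first direction is legitimate, i.e.\ that partitions of unity subordinate to bisections preserve \(\Gamma_{\B}\) and \(\Gamma_{\E}\).
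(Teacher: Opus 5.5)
Your proof is correct, and for (2)$\Rightarrow$(1) it is essentially the paper's argument: the same sections $f,g$ supported in open bisections with $f(\gamma)=b$ and $g(\xi)=e$, the same two-term norm estimate (you split it slightly differently), and the same convergence criterion for upper semicontinuous bundles (the paper cites Williams, Prop.~C.20). For (1)$\Rightarrow$(2), which the paper omits as standard, your argument is a correct way to supply the missing step: you reduce by partitions of unity to sections supported in open bisections $U,V$, use the formula $(f*g)(\gamma)=f(\eta(\gamma))\cdot g(\xi(\gamma))$ on the open bisection $UV$, and extend by zero off a compact subset of $UV$.
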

\noindent Here, the convolution \(f*g\) is defined by 
\[
 f*g(\gamma) =\sum_{\eta\zeta=\gamma}f(\eta)g(\zeta)
\]
for \(f\in \Gamma_{\B}, g\in \Gamma_{\E}\) and \(\gamma \in \G\).

\begin{proof}[Proof of Lemma~\ref{lem-equi-cond-continuity}]
\noindent (2).\(\implies\)(1). Let \((b_i,e_i)\in \B*\E\) be a net such that \((b_i,e_i) \to (b,e)\). Let \(\gamma_i:=p(b_i), \eta_i:= q(e_i), \gamma:=p(b), \textrm{~and~}\eta:=q(e)\). Then, \(\gamma_i\to \gamma\) and \(\eta_i\to \eta\) in~\(\G\).  Choose open bisections \(U\) and \(V\) of \(\G\) containing \(\gamma\) and \(\eta\), respectively, such that eventually \(\gamma_i \in U\) and \(\eta_i \in V\). Since the bundles \(\B\) and \(\E\) have \emph{enough sections}, there exist sections \(f\in \Gamma_{\B}\) and \(g\in \Gamma_{\E}\) with \(\textup{supp}(f)\subset U\) and \(\textup{supp}(g)\subset V\) such that \(f(\gamma) = b\) and \(g(\eta)=e\). Condition~\eqref{cond-2-left-action} ensures that \(f*g \in \Gamma_{\E} \). Since both \(f\) and \(g\) are bisection supported, we have 
	\[
	 f*g(\gamma_i\eta_i) = f(\gamma_i)g(\eta_i) \quad \textup{and} \quad
     f*g(\gamma \eta)=f(\gamma)g(\eta).\]
     Since $f*g \in \Gamma_{\E}$ (by assumption), $f*g(\gamma_i\eta_i) \to f*g(\gamma \eta)=be$.
     Hence, 
     \begin{equation}
         \label{limitu}
         \lim_{i}f(\gamma_i)g(\eta_i)=be.
     \end{equation}
    Note that 
	\[\	\|f(\gamma_i)g(\eta_i)-b_ie_i\| \leq \|f(\gamma_i)-b_i\|\|g(\eta_i)\| + \|b_i\|\|g(\eta_i)-e_i\|.
	\]
Since $b_i \to b$, $f(\gamma_i) \to b$, $e_i \to e$ and $g(\eta_i) \to e$, we can conclude from the above inequality that 
\begin{equation}
    \label{limitu1}
    \lim_{i}\|f(\gamma_i)g(\eta_i)-b_ie_i\|= 0.
\end{equation}
Eq.~\eqref{limitu} and Eq.~\eqref{limitu1} implies that $b_ie_i\to be$ (see \cite[Prop. C.20]{Dana-Williams}\footnote{The proof given for upper semicontinuous bundles of $C^{*}$-algebras works for Banach bundles too.}).  Hence, the map \(  (b,e) \mapsto be\) is continuous. 

\noindent (1).\(\implies\)(2). This is standard, and hence omitted. 
\end{proof}

 We now recall the notion of equivalence of Fell bundles in a special case, i.e., the underlying groupoids are the same for both the Fell bundles. We refer the reader to~\cite[Definition 6.1]{Muhly-Williams-Disintegration-theorem} for the general definition.
 
Let $\mathcal{G}$ be a locally compact Hausdorff \etale\  groupoid, let \(p_{\A}\colon \A\to \G\) and \(p_{\B}\colon \B \to \G\) be two Fell bundles over \(\G\). Let \(q\colon\E \to \G \) be an upper semicontinuous Banach bundle. Let \begin{align*}
   \E*_s\E &:=\{(e,f) \in \E \times \E: s(q(e)) = s(q(f))\};\\
   \E*_r\E&:=\{(e,f) \in \E \times \E: r(q(e))=r(q(f))\}.
\end{align*}
We say \(\E\) is an \emph{equivalence} between the Fell bundles \(\A\) and \(\B\) if there is a continuous left \(\A\)-action and a continuous right \(\B\)-action on \(\E\) which commute, and there are sesquilinear forms \(\E*_s\E \ni (e,f) \mapsto \Linpro{e}{f}\in \A\) and \(\E*_r\E \ni (e,f) \mapsto \inpro{e}{f} \in \mathcal{B}\) satisfying the usual positivity conditions on the unit fibres such that the following conditions hold. 
\begin{enumerate}
      \item \(p_{\A}(\Linpro{e}{f})=q(e)q(f)^{-1}\) and \(p_{\B}(\inpro{e}{f})=q(e)^{-1}q(f)\)
	\item \(\Linpro{e}{f}^*=\Linpro{f}{e}\) and \(\inpro{e}{f}^*=\inpro{f}{e}\);
	\item \(\Linpro{a\cdot e}{f} = a\Linpro{e}{f}\), \(\inpro{e}{f\cdot b} = \inpro{e}{f}b\) for  $a \in \A$ and $b\in \B$;
	\item  \(\Linpro{e}{f}g =e\inpro{f}{g}\).
\end{enumerate}
Also with the above inner products and actions, for \(\gamma\in \G\), \(\E_{\gamma}\) is a \(\A_{r(\gamma)}\)-\(\B_{s(\gamma)}\) imprimitivity bimodule.

We now show that the Fell bundle introduced in~\cite{Rennie_Sims} and the Fell bundle associated to our groupoid dynamical system are equivalent. Let $P$ be a subsemigroup of a countable discrete group $G$, and let $X$ be a product system of \(B\)-\(B\)-correspondences over $P^{\op}$. 
Note that to make sense of both Fell bundles, we need the following assumptions:
\begin{enumerate}
	\item[(\textbf{C1})] \((P, G)\) is quasi-lattice ordered, i.e., given two elements $x,y$, either they have no upper bound, or they have a least upper bound w.r.t. the partial order $\leq$. For $x,y \in G$, if $x$ and $y$ have an upper bound, we denote their l.u.b. by $x \vee y$; otherwise, we set $x \vee y=\infty$. 
	\item[(\textbf{C2})] \(X=\{X_s\}_{s\in P}\) is a  proper product system, and $X_s$ is full for every $s \in P$;
	\item[(\textbf{C3})] the product system \(X\) comes from an \(E_0\)-semigroup; i.e., there exists a full Hilbert $B$-module  \(E\) and unitaries  \(\sigma_s\colon E\otimes_BX_s\to E \) such that  
	\[
	 \sigma_t(\sigma_s\otimes 1) =\sigma_{ts}(1\otimes U_{s, t})
	\]
	for \(s,t\in P\). Here, \(U_{s,t}\colon X_s\otimes_BX_t\to X_{ts}\) denotes the unitary defining the multiplication of the product system \(X\).
	\end{enumerate}
Let the notation be as above for the rest of this paper. 

For $s \in P$, let $\alpha_s:\K_{B}(E) \to \K_{B}(E)$ be defined by $\alpha_s(T)=\sigma_s(T \otimes 1)\sigma_{s}^{*}$. Then, $\alpha:=\{\alpha_s\}_{s \in P}$ is a semigroup of endomorphisms of $\K_B(E)$. Let $(\D,\G)$ be the groupoid dynamical system associated with the semigroup dynamical system $(\K_B(E),P,\alpha)$. We denote the action of $\G$ on $\D$  by  $\widetilde{\alpha}:=\{\widetilde{\alpha}_{(F,g)}\}_{(F,g) \in \G}$. Since we have assumed that $(P,G)$ is quasi-lattice ordered, every element of $\Omega$ is directed (\cite[Lemma 6.4]{Amir_Sundar-product-system}), and then by Remark \ref{inductive_limit_non-unital_remark}  the fibre $\D_F$ can be identified with $\varinjlim_{s \in F}\K_B(E)$ with the connecting maps given by $\alpha_{ts^{-1}}$ if $s \leq t$. For $F \in \Omega$ and $p \in P$, let  $\lambda_{p}^F:\K_{B}(E) \to \varinjlim_{s \in F}\K_B(E)=\B_F$ be the canonical map. Then, for $(F,g) \in \G$, $\widetilde{\alpha}_{(F,g)}:\B_{F.g} \to \B_{F}$ satisfies the equality
\[
\widetilde{\alpha}_{(F,g)}(\lambda_{pg}^{Fg}(T))=\lambda_p^{F}(T)
\]
for $p \in F$ and $T \in \K_B(E)$. 
Let $\B$ be the Fell bundle over $\G$ associated to the groupoid dynamical system $(\D,\G,\widetilde{\alpha})$ (see ~\cite[Example 2.1]{Muhly-Williams-Disintegration-theorem}).

Let us next recall the Fell bundle considered in \cite{Rennie_Sims}.  We denote it by $\A$.  Let $(F,g) \in \G$, and let \[
d_{(F,g)}:=\{r \in F: r \geq e \vee g^{-1}\}.\]

\emph{Claim:} Let $(F,g) \in \mathcal{G}$. Then,  $e \vee  g^{-1}<\infty$ and $d_{(F,g)}$ is non-empty.

\noindent To see this, choose a sequence $(a_n)_n \in P$ such that $(P^{-1}a_n)_n \to F$. Hence, \[1_F(e)=\lim_{n \to \infty}1_{P^{-1}a_n}(e)=1.\] 
Hence, for $F \in \Omega$, $e \in F$. Since $(F,g) \in \mathcal{G}$, $Fg \in \Omega$.  For the same reason, $e \in Fg$, i.e., $g^{-1} \in F$. Since $F$ is directed, there exists $r \in F$ such that $r \geq e$ and $r \geq g^{-1}$. Hence, $e \vee g^{-1}<\infty$ and $d_{(F,g)}$ is non-empty.

  Moreover, as  $F$ is directed, it follows that $d_{(F,g)}$ is a directed set. The fibre $\A_{(F,g)}$ is given by
\[
\A_{(F,g)}:=\varinjlim_{p \in d_{(F,g)}}\K_B(X_
{pg}, X_{p}).
\]
The connecting maps $\K_B(X_{pg},X_{p}) \to \K_B(X_{apg},X_{ap})=\K_B(X_{pg}\otimes X_{a},X_{p} \otimes X_a)$ are given by $T \mapsto T \otimes 1$. The product rule on $\A$ is given by `composition', i.e., we can choose representatives in such a way that the composition makes sense, and it can be verified that the resulting rule is independent of all the choices. The $*$-operation is given by taking adjoints.  For $p,q \in P$ and $S \in \K_B(X_q,X_p)$, define a section $\phi_{p,q,S}:\G \to \A$ by 
\[
\phi_{p,q,S}(F,g) = \begin{cases}
		[S] & \textup{ if } g=p^{-1}q,\\
	0 & \textup{ otherwise. } 
	\end{cases}
\]
Then, $\A$ carries a unique topology which makes it an upper semicontinuous Fell bundle over $\G$ such that  $\{ \phi_{p,q,S}:p,q \in P,S \in \K_B(X_q,X_p)\}$ is total in~$\Gamma_{\A}$.

 Next, we describe an upper semicontinuous Banach bundle $\E$ over $\G$ as follows:   for \((F,g) \in \G\), define 
\[
 \E_{(F,g)} = \varinjlim_{p\in d_{(F,g)}} \K_B(E, X_p).
\]
Here, the connecting map
\(\K_B(E, X_p)\to  \K_B(E, X_{q})\), for $p \leq q$, is given by \(T\mapsto (T\otimes 1_{qp^{-1}})\sigma_{qp^{-1}}^{-1}\). We remind the reader that $\sigma:=\{\sigma_s\}_{s \in P}$ are the unitaries mentioned in Condition $\textbf{(C3)}$. 

 The left action of $\A$ and the right action of $\B$ on $\E$ are given by composition. To give a little bit of detail, let us explain the left action of $\A$. The formulae that appear below are only densely defined, and one can show that they extend by continuity.
 
Let \((F,g)\) and \((F^\prime,g^\prime)\) be two composable elements of \(\G\). Then, $F^{\prime}=Fg$.  Let \(a\in \A_{(F,g)}\) and \(e\in \E_{(F^\prime,g^\prime)}\). Choose representatives \(S\in \K_B(X_{pg}, X_{p})\) and \(T\in \K_B(E, X_q)\) such that \(a=[S]\) and \(e=[T]\) for some \(p \in d_{(F,g)}\) and \(q \in d_{(F^\prime,g^\prime)}\). 

Since $Fg$ is directed and hereditary, $q_0:=pg \vee q \in Fg$. Let $q_0=s_0pg$ and $q_0=s_1q$ for some $s_0,s_1\in P$. Then,   \begin{equation}
    \label{left action definition formula}
    a\cdot e:=[(S \otimes 1_{s_0})((T\otimes 1_{s_1})\sigma_{s_1}^{-1})]\in \K_B(E, X_{s_0p}).
\end{equation} In the above, $1_{s_i}$ is the identity operator on $X_{s_i}$.
It can be checked that this is a well-defined left action of \(\A\) on \(\E\). Similarly, we can define a right action of \(\B\) on \(\E\). For inner products, we take \(e= [S]\in \E_{(F,gg^{\prime})}\) with $S \in \K_B(E,X_p)$ and \(f=[T]\in \E_{(Fg,g^{'})}\) with $T \in \K_B(E,X_{pg})$, and define 
\[
 \Linpro{e}{f} :=[ST^*]\in \K_B(X_{pg},X_p)\subset \A_{(F,g)}.
\] 
The right inner product has a similar expression given by  
\[
\inpro{[S]}{[T]} :=[S^*T]\in \K_B(E) \subset \B
\]
for appropriate \(S\) and \(T\).
The representatives can be chosen so that the compositions $S^{*}T$ and $ST^{*}$ are well-defined. 
All the algebraic properties of the Fell bundle equivalence are straightforward to verify, so we omit them. 

To make $\E$ an upper semicontinuous bundle we need the following lemma.
\begin{lemma}\label{lem-cont-of-bund}
	Let \(p_0\in P\) be fixed, and let \([p_0, P):=\{p\in P: p\geq p_0\}\). Suppose \(f\colon [p_0, P) \to [0, \infty)\)
is a decreasing function, i.e., $f(x) \leq f(y)$ whenever $x \geq y$. Then, the map \(\phi\colon \Omega \to [0, \infty)\) defined by 
\[
\phi(F) = \begin{cases}
	0 & \textup{ if } p_0\notin F,\\
	\displaystyle \inf_{p\in F, p \geq p_0}f(p) & \textup{ if } p_0\in F
\end{cases}
\] 
is upper semicontinuous.
\end{lemma}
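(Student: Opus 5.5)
To prove that $\phi$ is upper semicontinuous, I will show that for every $c\in\mathbb{R}$ the set $\{F\in\Omega:\phi(F)<c\}$ is open in $\Omega$. Recall that $\Omega$ carries the topology inherited from $\{0,1\}^G$. Hence, for a fixed $x\in G$, both sets $\{F: x\in F\}$ and $\{F: x\notin F\}$ are clopen.

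Fix $F_0\in\Omega$ with $\phi(F_0)<c$. There are two cases.

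\emph{Case 1: $p_0\notin F_0$.} Then $\phi(F_0)=0<c$. The set $U:=\{F\in\Omega: p_0\notin F\}$ is an open neighbourhood of $F_0$. For every $F\in U$ we have $\phi(F)=0<c$.

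\emph{Case 2: $p_0\in F_0$.} Then $\inf_{p\in F_0,\,p\geq p_0}f(p)<c$. So there exists $p_1\in F_0$ with $p_1\geq p_0$ and $f(p_1)<c$. Let
\[
U:=\{F\in\Omega: p_0\in F,\ p_1\in F\}.
\]
This is an open neighbourhood of $F_0$. For $F\in U$, the element $p_1$ belongs to the index set $\{p\in F: p\geq p_0\}$. Therefore $\phi(F)\leq f(p_1)<c$.

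In both cases $F_0$ has an open neighbourhood contained in $\{\phi<c\}$, so this set is open and $\phi$ is upper semicontinuous. For $c\le 0$ the set is empty, since $\phi\geq 0$, and is trivially open.

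Note that this argument does not use that $f$ is decreasing. The monotonicity is presumably needed later, to identify $\phi$ with the norm function $F\mapsto\|[T]\|$ in the inductive limit. There the norm of the image of $T$ along the directed set is the infimum of the (decreasing) norms of its images. So the statement as given should be essentially routine.

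The only point needing care is that membership of a single group element defines clopen sets in the product topology. This is immediate from the identification of $\mathcal{P}(G)$ with $\{0,1\}^G$.
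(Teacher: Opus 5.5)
Your proof is correct and follows the paper's argument. The paper uses the same case split on whether $p_0\in F_0$: the clopen set $\{F\in\Omega: p_0\notin F\}$ handles the first case, and a basic neighbourhood determined by a witness $p_1\geq p_0$ with $f(p_1)<c$ handles the second. The only difference is that the paper takes $\{F: p_1\in F\}$ alone, which works equally well since $\phi(F)=0\le f(p_1)$ when $p_0\notin F$. You are also right that the paper's proof does not use the monotonicity of $f$.
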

\begin{proof}
Let \(\alpha>0\), and let \(R= \{F\in \Omega : \phi(F)<\alpha\}\). Fix \(F_0\in R\).

\noindent Case (I): \(p_0\notin F_0\). Then, \(F_0\in \{F\in \Omega:p_0\notin F\} \subseteq R\). As $\{F \in \Omega: p_0 \notin F\}$ is open, it follows that  $F_0$ is an interior point of  \(R\).

\noindent Case (II): \(p_0\in F_0\). Since \(\phi(F_0) = \inf\{f(p): p\in F_0 \textup{ and } p\geq p_0\} <\alpha\), there exists \(p_1\in F_0\) such that \(f(p_1)<\alpha\). Consider the basic open set \(U_{p_1} = \{F \in \Omega : p_1\in F\}\). For any
\(F\in U_{p_1}\), we have \(\phi(F)\leq f(p_1)<\alpha\).
Hence, \(F_0\in U_{p_1}\subseteq R\), and in this case too, $F_0$ is an interior point of~$R$.
\end{proof}

 For $\phi \in C(\Omega)$ and $g \in G$, let $\phi \otimes e_g \in C_{c}(\G)$ be the function defined by \[
\phi\otimes e_g(F,h) = \begin{cases}
		\phi(F) & \textup{ if  $h=g$},\\
	0 & \textup{ otherwise. } 
	\end{cases}
\]
For $(F,g) \in \G$ and for $p \in d_{(F,g)}$, denote the natural map $\K_{B}(E,X_p) \to  \varinjlim_{q \in d_{(F,g)}}\K_{B}(E,X_q)$ by $i_{p}^{(F,g)}$.  
For \(p\in P\), \(T\in \K_{B}(E,X_p)\) and $g \in G$, define a  section $f_{p,g,T}$ of the bundle $\E$ by
\[
f_{p,g, T}(F,h) = \begin{cases}
		i_{p}^{(F,g)}(T) & \textup{ if }  p\in d_{(F,g)} ~~\textup{and $h=g$},\\
	0 & \textup{ otherwise. } 
	\end{cases}
\]
Let $\Gamma:=\overline{\textup{span}\{f_{p,g,T}: p\in P, T \in \K_B(E,X_p),g \in G\}}$. 
Here, the closure is taken with respect to the inductive limit topology.

 The following lemma is similar to the one given in \cite[Lemma 3.4]{Rennie_Sims}. However, the proof given there is not very clear to the authors, and the analogous statement of~\eqref{lem-cont-cond-1} is not to be found in \cite{Rennie_Sims}, which is crucial to justify the surjectivity part in \cite[Thm. 5.1]{Rennie_Sims}. Also, the verification that the multiplication operation on $\A$ is continuous seems to be omitted.  
For these reasons, the authors have decided to include proofs of the next two lemmas. 

\begin{lemma}
With the foregoing notation, we have the following.
\begin{enumerate}
    \item\label{lem-cont-cond-1} For $\phi \in C_{c}(\G)$ and $f \in \Gamma$, the pointwise product $\phi f \in \Gamma$.
    \item\label{lem-cont-cond-2} For $f \in \Gamma$, the map \[
    \Gamma \ni (F,g)  \mapsto \|f(F,g)\| \in \mathbb{R}
    \]
    is upper semicontinuous.
    \item\label{lem-cont-cond-3} For every $(F,g) \in \G$, $\{f(F,g):f \in \Gamma\}$ is dense in $\E_{(F,g)}$. 
\end{enumerate}
Also, there exists a unique topology on \(\E\) that makes \(\E\) an upper semicontinuous Banach bundle such that $\Gamma=\Gamma_{\E}$.
\end{lemma}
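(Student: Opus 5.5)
The plan is to verify the three conditions directly for the generators \(f_{p,g,T}\), pass to \(\Gamma\) by density and linearity, and then invoke the standard construction of upper semicontinuous Banach bundles from a family of sections (e.g. \cite[Prop. C.20]{Dana-Williams} and its Banach bundle analogue, or the Muhly--Williams appendix).

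For \eqref{lem-cont-cond-1}, since \(\G\) is \etale\ and the sets \(\{(F,g): F\in U\}\) with \(U\subset\Omega\) clopen form a basis, \(C_c(\G)\) is the closure of the span of the functions \(\phi\otimes e_h\) with \(\phi\in C(\Omega)\). Moreover, \(C(\Omega)\) is the closed span of the products \(1_{U_{q_1}}\cdots 1_{U_{q_n}}\), where \(U_q=\{F: q\in F\}\) and \(q_i\in G\). It therefore suffices to compute \((1_{U_q}\otimes e_h)f_{p,g,T}\). This vanishes unless \(h=g\). When \(h=g\), we use that \(F\) is hereditary and directed and that \((P,G)\) is quasi-lattice ordered. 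The condition "\(q\in F\) and \(p\in d_{(F,g)}\)" is equivalent to "\(p\vee q\in F\)" (zero if \(p\vee q=\infty\)), and \(p\vee q\) then lies in \(d_{(F,g)}\). Hence
\[
(1_{U_q}\otimes e_g)f_{p,g,T}=f_{p\vee q,g,T'},\qquad T'=(T\otimes 1_{(p\vee q)p^{-1}})\sigma^{-1}_{(p\vee q)p^{-1}},
\]
because \(i_p^{(F,g)}(T)=i_{p\vee q}^{(F,g)}(T')\) by the definition of the connecting maps. Iterating over the \(q_i\), using \(\|\phi f\|_\infty\le\|\phi\|_\infty\|f\|_\infty\), and noting that supports remain in a fixed compact set, we extend the identity to all of \(\Gamma\) in the inductive limit topology.

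For \eqref{lem-cont-cond-2}, take a finite sum \(f=\sum_k f_{p_k,g_k,T_k}\). Since only finitely many \(g\)'s occur and each slice \(\Omega\times\{g\}\cap\G\) is clopen, we may fix \(g\). On the clopen pieces determined by which \(p_k\) belong to \(F\) (through the conditions \(p_k\ge e\vee g^{-1}\) and \(p_k\in F\)), and using the identity from \eqref{lem-cont-cond-1}, we rewrite \(f\) locally as a single \(f_{p_0,g,S}\) with \(p_0=\bigvee p_k\) and \(S\in\K_B(E,X_{p_0})\). In the inductive limit, \(\|i_{p_0}(S)\|=\inf_{p\in F,p\ge p_0}\|(S\otimes 1)\sigma^{-1}\|\), and \(p\mapsto\|(S\otimes1_{pp_0^{-1}})\sigma^{-1}_{pp_0^{-1}}\|\) is decreasing. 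Lemma~\ref{lem-cont-of-bund} therefore gives upper semicontinuity. For a general \(f\in\Gamma\), \(\|f(\cdot)\|\) is a uniform limit of upper semicontinuous functions, and uniform limits preserve upper semicontinuity.

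The main obstacle is the bookkeeping in \eqref{lem-cont-cond-2}. The partition into clopen pieces must be chosen carefully, and we must check that the hereditary and directed properties force \(p_0\in F\) exactly when all the relevant \(p_k\) lie in \(F\). Part \eqref{lem-cont-cond-3} is immediate: every element of the algebraic inductive limit has the form \(i^{(F,g)}_p(T)=f_{p,g,T}(F,g)\), and these elements are dense. Once (1)–(3) are established, the standard theorem yields a unique topology making \(\E\) an upper semicontinuous Banach bundle whose continuous sections contain \(\Gamma\). Finally, (1) together with a partition-of-unity argument shows that \(\Gamma\) is closed under the local approximations that characterize \(\Gamma_\E\), so \(\Gamma=\Gamma_\E\).
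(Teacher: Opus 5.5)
Your proposal is correct and follows essentially the same route as the paper. For (1) you reduce to the identity $(1_{\Omega q}\otimes e_g)f_{p,g,T}=f_{p\vee q,g,(T\otimes 1_s)\sigma_s^{-1}}$, using the quasi-lattice order and the fact that $F$ is hereditary and directed. For (2) you partition $\Omega$ into the clopen sets determined by which $p_k$ lie in $F$, collapse the sum to a single operator at $p_J=\bigvee_{j\in J}p_j$, and apply Lemma~\ref{lem-cont-of-bund}, exactly as the paper does. The only differences are cosmetic: you use Stone--Weierstrass with products over $q_i\in G$ where the paper notes that $\mathrm{span}\{1_{\Omega p}:p\in P\}$ is already an algebra; you spell out the uniform-limit step from finite sums to all of $\Gamma$; and the bundle-construction result to cite is \cite[Thm. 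C.25]{Dana-Williams} rather than Prop.~C.20.
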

\begin{proof}
Note that $\textup{span}\{1_{\Omega p} \otimes e_g:g \in G, p \in P\}$ is dense in $C_{c}(\G)$ with the inductive limit topology. To see this, first observe that as $P$ is quasi-lattice ordered, $\Omega p \cap \Omega q$ is empty if $p \vee q=\infty$ and $\Omega p \cap \Omega q=\Omega (p \vee q)$ if $p \vee q<\infty$. Also, $\{1_{\Omega p}: p \in P\}$ separates points of $\Omega$. Hence, $\textup{span}\{1_{\Omega p}: p \in P\}$ is an algebra and is dense in $C(\Omega)$. Consequently, $\{1_{\Omega p} \otimes e_g: p \in P, g \in G\}$ is total in $C_{c}(\G)$.

Let $p,q \in P$, $T \in \K_{B}(E,X_p)$ and $g_1,g_2 \in G$ be given. Suppose $p$ and $q$ have an upper bound. Let $s=(p \vee q)p^{-1}$. Then,  
\begin{equation}
    \label{master_equation_upper_semi}
    (1_{\Omega q} \otimes e_{g_1})f_{p,g_2, T} = \begin{cases}
		f_{p \vee q, g_1, (T\otimes 1_{s})\sigma_{s}^{-1}} & \textup{ if } g_1=g_2,\\
	0 & \textup{ if $g_1 \neq g_2$ }. 
	\end{cases}
    \end{equation}
    If $p$ and $q$ do not have an upper bound, then
    \begin{equation}
        \label{master_equation_upper_semi_1}
        (1_{\Omega q}\otimes e_{g_1})f_{p,g_2,T}=0.
    \end{equation}
As $\{1_{\Omega p} \otimes e_g: p\in P, g \in G\}$ is total in $C_{c}(\G)$, it follows from Eq.~\eqref{master_equation_upper_semi} and Eq.~\eqref{master_equation_upper_semi_1} that for $\phi \in C_{c}(\G)$ and $f \in \Gamma$, $\phi f \in \Gamma$. This proves $(1)$. 

Next, we prove $(2)$. It suffices to prove that $\G \ni (F,g) \mapsto \|f(F,g)\| \in \mathbb{R}$ is upper semicontinuous when $f$  is of the form
\[
f=\sum_{i=1}^{n}f_{p_i,g_i,T_i}.
\]
Let $ f:=\sum_{i=1}^{n}f_{p_i,g_i,T_i}$ be one such section. It suffices to prove that  for every $j \in \{1,2,\cdots,n\}$, the map
\[
\Omega \ni F \mapsto \|f(F,g_j)\|=\Big\|\sum_{g_i=g_j, p_i \geq g_{i}^{-1}}f_{p_i,g_i,T_i}(F,g_j)\Big\| \in \mathbb{R}
\]
is upper semicontinuous. Hence, we can, without loss of generality, assume that $g_i=g_j$ for all $i,j$, and set $g:=g_i$. And we can assume that $p_i \geq g^{-1}$ for all $i$. Moreover, 
$f_{p,g,T}+f_{p,g,S}=f_{p,g,T+S}$, and hence we can further assume that $p_i$'s are distinct. 

Let $\phi:\Omega \to \mathbb{C}$ be defined by $\phi(F):=\|f(F,g)\|$. We need to show that $\phi$ is upper semicontinuous. 
Let $J \subset \{1,2,\cdots,n\}$, and let \[U_{J}:=\{F \in \Omega: p_i \in F \textrm{~~for $i \in J$ and } p_i \notin F \textrm{~for $i \notin J$}\}.\]
        
        Suppose $J \neq \emptyset$ and  $U_J \neq \emptyset$. As every element of $\Omega$ is directed and hereditary, $p_J:=\bigvee_{j \in J}p_j<\infty$.
      Moreover, if $F\in U_J$, then $p_j\in F$ for every $j\in J$. As $F\in \Omega$ is directed, it follows that $p_J\in F$.
    For $j \in J$, set $s_j:=p_{J}p_j^{-1}$.  Set $S:=\sum_{j \in J}(T_j \otimes 1_{s_j})\sigma_{s_j}^{-1} \in \K_B(E,X_{p_J})$. Define a function $\chi_0:(p_J,P] \to [0,\infty)$ by \[
    \chi_0(p):=\|(S \otimes 1_{pp_{J}^{-1}})\sigma_{pp_J^{-1}}^{-1}\|.
    \] Then, $\chi_0$ is decreasing. Define $\chi:\Omega \to \mathbb{C}$ by\[
\chi(F) = \begin{cases}
	0 & \textup{ if } p_J\notin F,\\
	\displaystyle \inf_{p\in F, p \geq p_J}\|(S \otimes 1_{pp_J^{-1}})\sigma_{pp_J^{-1}}^{-1}\| & \textup{ if } p_J \in F .
\end{cases}
\] 
It follows from Lemma~\ref{lem-cont-of-bund} that $\chi$ is upper semicontinuous. 
 Note that for $F \in U_J$, 
\[
\phi(F)=\Big\|\sum_{j \in J}i_{p_j}^{(F,g)}(T_j)\Big\|=\Big\|\sum_{j \in J}i_{p_J}^{(F,g)}((T_j \otimes 1_{s_j})\sigma_{s_j}^{-1})\Big\|=\|i_{p_J}^{(F,g)}(S)\|=\chi(F).
\]
 Hence, $\phi|_{U_J}$ is upper semicontinuous. If $J$ is empty, then $\phi|_{U_J}=0$. Since $\{U_J: J \subset \{1,2,\cdots,n\}\}$ is an open cover of $\Omega$, it follows that $\phi$ is upper semicontinuous. The proof of $(2)$ is now over. 

 By the definition of $\E_{(F,g)}$, the set $\{f_{p,g,T}(F,g): p \in P, T \in \K_B(E,X_p)\}$ which coincides with $\big\{i_p^{(F,g)}(T): p \in d_{(F,g)}, T \in \K_B(E,X_p)\big\}$ is total in $\E_{(F,g)}$. Hence, $(3)$ follows.  The existence of the topology follows by applying \cite[Thm. C.25]{Dana-Williams}. (The footnote on Page 10 is applicable here as well).
 \end{proof}
 
\begin{lemma}
The left action of \(\A\) on \(\E\) is continuous.
\end{lemma}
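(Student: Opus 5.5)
By Lemma~\ref{lem-equi-cond-continuity}, continuity of the left action $\A * \E \to \E$ is equivalent to the statement that $f * g \in \Gamma_{\E}$ whenever $f \in \Gamma_{\A}$ and $g \in \Gamma_{\E}$. I will prove this convolution statement instead of working with nets directly.

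First I reduce to generators. The sections $\phi_{p,q,S}$ are total in $\Gamma_{\A}$, and the sections $f_{r,h,T}$ are total in $\Gamma_{\E}=\Gamma$, both in the inductive limit topology. Convolution $\Gamma_{\A}\times \Gamma_{\E}\to$ (sections of $\E$) is bilinear. It is also continuous in the inductive limit topology, since $\G$ is \etale\ and supports are compact, so $\|f*g\|_\infty\le C\|f\|_\infty\|g\|_\infty$ with supports controlled. Since $\Gamma$ is closed, it therefore suffices to show that $\phi_{p,q,S} * f_{r,h,T} \in \Gamma$ for $p,q,r\in P$, $S\in \K_B(X_q,X_p)$, $T\in \K_B(E,X_r)$ and $h\in G$. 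By Lemma part~\eqref{lem-cont-cond-1}, we may also multiply by functions $1_{\Omega u}\otimes e_g$, which lets us localize supports if needed.

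The main step is to compute $\phi_{p,q,S} * f_{r,h,T}$ explicitly. Set $g_0=p^{-1}q$. At $(F,g)$ the convolution is nonzero only if $g=g_0h$, and then it equals $\phi_{p,q,S}(F,g_0)\cdot f_{r,h,T}(Fg_0,h)$. This term is nonzero only when $p\in F$ (equivalently $q\in Fg_0$) and $r\in d_{(Fg_0,h)}$. Using the formula~\eqref{left action definition formula} with the representatives $S$ at $p$ and $T$ at $r$, the product is $[(S\otimes 1_{s_0})(T\otimes 1_{s_1})\sigma_{s_1}^{-1}]$, where $q\vee r = s_0 q = s_1 r$. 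Whenever $r,q\in Fg_0$, the element $q\vee r$ lies in $Fg_0$ because $Fg_0$ is directed and hereditary. If $q\vee r=\infty$, the product vanishes identically. Otherwise, put $s_0=(q\vee r)q^{-1}$ and $T':=(S\otimes 1_{s_0})(T\otimes 1_{s_1})\sigma_{s_1}^{-1}\in \K_B(E,X_{s_0p})$. I then expect the identity
\[
\phi_{p,q,S}*f_{r,h,T}= f_{s_0p,\,g_0h,\,T'} .
\]
To verify it, I check that for every $(F,g)$ the support conditions agree. On the right, the condition is $s_0p\in d_{(F,g_0h)}$. On the left, the conditions are $p\in F$ and $r\in d_{(Fg_0,h)}$, which combine to $q\vee r\in Fg_0$, that is, $s_0p\in F$. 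Both sides also need $s_0p\geq e\vee (g_0h)^{-1}$, and this follows because $q\vee r\geq r\ge e\vee h^{-1}$ after translating by $g_0^{-1}$. Once the conditions match, I check that the values agree as elements of the inductive limit, using $i^{(F,g)}$ and the compatibility of the connecting maps.

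The step I expect to be the main obstacle is this bookkeeping of the support conditions. The point is to confirm that $p\in F$ together with $r\in Fg_0$ is equivalent to $s_0p\in F$ (together with $q\in Fg_0$). This should come from $F$ being hereditary, directed and containing $e$, the quasi-lattice property (C1), and the fact noted in the proof of Lemma~\ref{lem-cont-of-bund} that $\Omega q\cap \Omega r=\Omega(q\vee r)$. Once the identity holds, the convolution lies in $\Gamma$, and Lemma~\ref{lem-equi-cond-continuity} gives continuity of the left action.
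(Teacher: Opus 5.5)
Your proposal is correct and follows essentially the same route as the paper. Both reduce, via Lemma~\ref{lem-equi-cond-continuity}, to convolutions of the generating sections and verify $\phi_{p,q,S}*f_{r,h,T}=f_{s_0p,\,g_0h,\,T'}$. Your continuity-of-convolution argument makes explicit the density reduction that the paper leaves implicit.

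Two small points to tidy. First, state the identity under the assumption $r\geq h^{-1}$: otherwise $f_{r,h,T}=0$ and there is nothing to prove, whereas the right-hand side need not vanish. Second, $s_0p\geq e$ holds simply because $s_0p\in P$, not by translating $q\vee r\geq e$.
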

\begin{proof}
For $(F,g) \in \G$ and for $p \in d_{(F,g)}$, we denote the natural map $\K_B(X_{pg},X_p) \to \varinjlim_{q \in d_{(F,g)}}\K_{B}(X_{qg},X_q)$ by $j_{p}^{(F,g)}$.  
For \(p,q\in P\), \(T\in \K_B(X_q,X_p)\), define a  section $\phi_{p,q,T}$ of the bundle $\A$ by
\[
\phi_{p,q, T}(F,h) = \begin{cases}
		j_{p}^{(F,g)}(T) & \textup{ if }  p\in d_{(F,g)} ~~\textup{and $g=p^{-1}q$},\\
	0 & \textup{ otherwise. } 
	\end{cases}
\]
Let 
\begin{align*}
    \Gamma_0&:=\textup{span}\{\phi_{p,q,T}:p,q \in P, T \in \K_B(X_q,X_p)\}, \textrm{~and~}\\
    \Gamma_1&:=\textup{span} \{f_{p,g,S}: p \in P, g \in G, S \in \K_B(E,X_p)\}.
\end{align*}
Then, by construction,  $\Gamma_0$ is dense in $\Gamma_\A$ and $\Gamma_1$ is dense in $\Gamma_\E$. We now use Lemma~\ref{lem-equi-cond-continuity} to prove that the left action of \(\A\) on \(\E\) is continuous. Thus, it suffices to show that $\Gamma_0*\Gamma_1 \subset \Gamma_{\E}$. 

Fix $p,q,r \in P$, $g \in G$, $S \in \K_B(X_q,X_p)$ and $T \in \K_B(E,X_r)$.  Note that $\textup{supp}(\phi_{p,q,S}) \subset \Omega \times \{p^{-1}q\}$ and $\textup{supp}(f_{r,g,T}) \subset \Omega \times \{g\}$. Hence, the convolution is supported in $\Omega \times \{p^{-1}q g\}$. Set $g_0:=p^{-1}qg$. 

Now, for $(F,g_0) \in \G$,
\[
\phi_{p,q,S}*f_{r,g,T}(F,g_0)=1_{F}(q^{-1}p)\phi_{p,q,S}(F,p^{-1}q)f_{r,g,T}(Fp^{-1}q,q^{-1}pg_0).
\]
The RHS vanishes unless $p \in F$, $r \geq g^{-1}$ and $r \in Fp^{-1}q$. Let $s_0,s_1 \in P$ be such that $r\vee q=s_0q$ and $r\vee q=s_1r$. Set $R:=(S \otimes 1_{s_0})\cdot((T \otimes 1_{s_1})\sigma_{s_1}^{-1}) \in \K_B(E,X_{s_0p})$. It can now be checked from the definition of the left action (see Eq.~\eqref{left action definition formula}) that 
\[
\phi_{p,q,S}*f_{r,g,T}=f_{s_0p, g_0, R}. 
\]
Hence, $\Gamma_0*\Gamma_1 \subset \Gamma_\E$. The conclusion follows from Lemma \ref{lem-equi-cond-continuity}. 
\end{proof}
The continuity of the right action is analogous. Therefore, we have proved the following result.
\begin{theorem}
\label{Fell_bundle_equivalence_theorem}
Under the assumptions (\textbf{C1})-(\textbf{C3}), the upper semicontinuous Banach bundle \(\E\) implements an equivalence of Fell bundles between \(\A\) and \(\B\).
\end{theorem}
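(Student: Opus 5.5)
The plan is to assemble the pieces already established and check the remaining axioms of the Muhly--Williams definition fibrewise.

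By the lemma constructing the topology on $\E$, we know that $\E$ is an upper semicontinuous Banach bundle with $\Gamma_{\E}=\Gamma$. By the preceding lemma, the left $\A$-action is continuous. The right $\B$-action is handled the same way, through Lemma~\ref{lem-equi-cond-continuity}. The sections $f_{p,g,T}$ span a dense subspace of $\Gamma_{\E}$. The Fell bundle $\B$ of the groupoid dynamical system has total sections of the form $F\mapsto \lambda_p^F(K)$ supported on $\Omega\times\{h\}$, with $K\in\K_B(E)$. For such sections, $f_{p,g,T}*b$ is again a single section $f_{q,gh,T'}$. To get it, we move $T$ to a common level $q\geq p$ via $T\mapsto (T\otimes 1)\sigma^{-1}$ and compose with the appropriate $\alpha$-translate of $K$. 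The identity $\alpha_s(K)=\sigma_s(K\otimes 1)\sigma_s^{*}$ is exactly what makes the composition at level $q$ compatible with the connecting maps of $\varinjlim\K_B(E)$.

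Next comes continuity of the two inner products. We only need to test them on pairs of basic sections, because the inner products are sesquilinear and bounded by $\|e\|\|f\|$. For example, take $e=i_p(S)$ and $f=i_{p'}(T)$. We lift both to $p\vee p'$, or to $p\vee p'g$ in the case of $\Linpro{\cdot}{\cdot}$. Then $\Linpro{f_{p,\cdot,S}}{f_{p',\cdot,T}}$ becomes the section $\phi_{q,qg,\,S'T'^{*}}$ restricted to a clopen set. Likewise $\inpro{\cdot}{\cdot}$ becomes a basic section of $\B$. A standard argument, as in Lemma~\ref{lem-equi-cond-continuity}, then gives continuity on $\E*_s\E$ and $\E*_r\E$. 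The algebraic identities (1)--(4) reduce, on representatives at a common level, to the identities $(ST^*)R=S(T^*R)$, $(XY)^*=Y^*X^*$ and compatibility of composition with $T\mapsto T\otimes 1$. These hold for compact operators between Hilbert modules. As remarked in the text, we treat them as routine.

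The main step is the fibrewise imprimitivity condition: $\E_{(F,g)}$ must be an $\A_{F}$-$\B_{Fg}$ imprimitivity bimodule, where $\A_F:=\A_{(F,e)}=\varinjlim_{p\in F}\K_B(X_p)$ and $\B_{Fg}=\varinjlim\K_B(E)$. Here $\A_{(F,e)}$ is an inductive limit over $d_{(F,e)}=F\cap P$. At a fixed level $p$, $\K_B(E,X_p)$ is a $\K_B(X_p)$-$\K_B(E)$ imprimitivity bimodule, because $E$ and $X_p$ are full Hilbert $B$-modules. The connecting maps $T\mapsto (T\otimes 1)\sigma^{-1}$ are isometric bimodule maps, intertwining $K\mapsto K\otimes 1$ on the left and $\alpha$ on the right. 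An inductive limit of imprimitivity bimodules along compatible isometric connecting maps is an imprimitivity bimodule over the limit algebras. The one thing to verify is fullness of the inner products in the limit. This holds because the span of the inner products is dense at each level, and the union over levels is dense. We also have to match the index sets. The set $d_{(F,g)}$ is cofinal in $F\cap P$, and the map $p\mapsto pg$ identifies the index set for $\B_{Fg}$ with $\{pg:p\in d_{(F,g)}\}$, which is cofinal in $Fg$. This uses directedness of elements of $\Omega$ (\cite[Lemma 6.4]{Amir_Sundar-product-system}) and the Claim proved above. I expect this identification of the indexing sets, together with the isomorphism $\B_{Fg}\cong\varinjlim\K_B(E)$ of Remark~\ref{inductive_limit_non-unital_remark}, to be the delicate bookkeeping point. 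Once it is in place, the equivalence follows.
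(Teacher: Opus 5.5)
Your proposal is correct in substance and follows the paper's route. You take the topology lemma on $\E$ and the continuity of the left $\A$-action as established, obtain the right $\B$-action analogously through Lemma~\ref{lem-equi-cond-continuity} on basic sections, and treat the algebraic identities and the fibrewise imprimitivity checks as routine; you actually spell these out in more detail than the paper, which omits them.

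One correction: the connecting maps $T\mapsto (T\otimes 1)\sigma^{-1}$ are in general not isometric. Condition (\textbf{C2}) does not require the left action of $B$ on $X_s$ to be faithful, and this is exactly why the fibre norm in the topology lemma is an infimum over levels and only upper semicontinuous. Your inductive-limit argument that $\E_{(F,g)}$ is an $\A_F$--$\B_{Fg}$ imprimitivity bimodule still goes through unchanged with contractive connecting maps that intertwine the inner products.
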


\begin{remark}
Since \(\A\) and \(\B\) are equivalent as Fell bundles,~\cite[Thm. 14]{Sims-Williams-Equivelence-thm-red-version} says that the Fell bundle \(C^*\)-algebras \(C^*_{\red}(\G;\A)\) and \(C^*_{\red}(\G;\B)\) are Morita equivalent. 
It was proved in  \cite[Thm. 5.1]{Rennie_Sims} that, at least when $\G$ is amenable, $C^{*}_{\red}(\G;\A)$ is isomorphic to the Nica--Toeplitz algebra which coincides with the reduced $C^{*}$-algebra of $X$, and it was proved in \cite[Thm.~4.3]{Sundar_Khoshkam} that $C^{*}_{\red}(\G;\B)$ is isomorphic to $\K_{B}(E)\rtimes_{\red} P$. One of the main results of \cite[Thm. 1.1]{Amir_Sundar-product-system} states that $\K_{B}(E)\rtimes_{\red} P$ and the reduced $C^{*}$-algebra of $X$ are Morita equivalent. The above theorem reconciles these pictures and says that the Morita equivalence is indeed implemented by a Fell bundle equivalence.  
\end{remark}

\paragraph{\itshape Acknowledgements:}
We are grateful to the anonymous referee for carefully reading the manuscript and for providing valuable comments and suggestions, which have subsequently improved the clarity and readability of the paper.


\begin{thebibliography}{10}
	
	\bibitem{Anantharaman}
	Claire~Anantharaman-Delaroche and Jean~Renault.
	\newblock {\em Amenable groupoids}, volume~36 of {\em Monographies de
		L'Enseignement Math\'ematique [Monographs of L'Enseignement Math\'ematique]}.
	\newblock L'Enseignement Math\'ematique, Geneva, 2000.
	\newblock With a foreword by Georges Skandalis and Appendix B by E. Germain.
	
	\bibitem{Anantharaman-Delaroche-exact-gpd}
	Claire~Anantharaman-Delaroche.
	\newblock Exact groupoids.
	\newblock https://hal.science/hal-01316189/document.
		\bibitem{Amir_Sundar-product-system}
	Md~Amir Hossain and Sundar Shanmugasundaram.
	\newblock {R}educed {$C^*$}-algebras of product systems---an {$E_0$}-semigroup
	and a groupoid perspective. \newblock {\em J. Noncommut. Geom.} DOI 10.4171/JNCG/675, 2026. 
	
	\bibitem{IW-2012-Ideal-st}
	Marius Ionescu and Dana~P. Williams.
	\newblock Remarks on the ideal structure of {F}ell bundle {$C^\ast$}-algebras.
	\newblock {\em Houston J. Math.}, 38(4):1241--1260, 2012.
	
	\bibitem{Lalonde}
	Scott~M. LaLonde.
	\newblock Nuclearity and exactness of groupoid crossed products.
	\newblock {\em Journal of Operator theory}, 74:213--245, 2015.
	
	\bibitem{Lalonde-2016-Equivalence-exact}
	Scott~M. LaLonde.
	\newblock Equivalence and exact groupoids.
	\newblock {\em Houston J. Math.}, 42(4):1267--1290, 2016.
	
	\bibitem{Lalonde-stabilization-thm}
	Scott~M. LaLonde.
	\newblock Some consequences of stabilization theorem for {F}ell bundles over
	exact groupoids.
	\newblock {\em J. Operator Theory}, 81(2):335--369, 2019.
	
	\bibitem{Lalonde-Some-permanenece-property-of-exact}
	Scott~M. LaLonde.
	\newblock On some permanence properties of exact groupoids.
	\newblock {\em Houston J. Math.}, 46(1):151--187, 2020.
	
	\bibitem{Xin-Li-2012-Nucl-semigroup-alg-amen}
	Xin Li.
	\newblock Nuclearity of semigroup {$C^*$}-algebras and the connection to
	amenability.
	\newblock {\em Adv. Math.}, 244:626--662, 2013.
	
	\bibitem{Muhly-Williams-Disintegration-theorem}
	Paul~S. Muhly and Dana~P. Williams.
	\newblock Equivalence and disintegration theorems for {F}ell bundles and their
	{$C^*$}-algebras.
	\newblock {\em Dissertationes Math.}, 456:1--57, 2008.
	
	\bibitem{Rennie_Sims}
	Adam Rennie, David Robertson, and Aidan Sims.
	\newblock Groupoid {F}ell bundles for product systems over quasi-lattice
	ordered groups.
	\newblock {\em Math. Proc. Cambridge Philos. Soc.}, 163(3):561--580, 2017.
	
	\bibitem{Sims-Williams-Equivelence-thm-red-version}
	Aidan Sims and Dana~P. Williams.
	\newblock An equivalence theorem for reduced {F}ell bundle {$C^*$}-algebras.
	\newblock {\em New York J. Math.}, 19:159--178, 2013.
	
	\bibitem{Sundar_Khoshkam}
	S.~Sundar.
	\newblock On a construction due to {K}hoshkam and {S}kandalis.
	\newblock {\em Doc. Math.}, 23:1995--2025, 2018.
	
	\bibitem{Dana-Williams}
	Dana~P. Williams.
	\newblock {\em Crossed products of {$C{^\ast}$}-algebras}, volume 134 of {\em
		Mathematical Surveys and Monographs}.
	\newblock American Mathematical Society, Providence, RI, 2007.
	
\end{thebibliography}

\end{document}